\date{}
\newcommand\N{\mathbb{N}} % Entiers naturels
\newcommand\R{\mathbb{R}} % Réels
\newcommand\C{\mathbb{C}} % Complexes
\newcommand\T{\mathbb{T}} % Tore
\newcommand\D{\mathbb{D}}% Disque unité
\newcommand\II{\num{i}}
\theoremstyle{plain}
\numberwithin{equation}{section}
\newtheorem{theorem}{Theorem}[section]
\newtheorem{proposition}[theorem]{Proposition}
\newtheorem{lemma}[theorem]{Lemma}
\newtheorem{remark}[theorem]{Remark}
  \title[]{Existence of small loops in the Bifurcation diagram  near the degenerate eigenvalues}
\author[T. Hmidi]{Taoufik  Hmidi}
 \author[C. Renault]{Coralie Renault}
\address{IRMAR, Universit\'e de Rennes 1\\ Campus de
Beaulieu\\ 35~042 Rennes cedex\\ France}
\email{thmidi@univ-rennes1.fr}
\email{coralie.renault@univ-rennes1.fr}
\subjclass[2000]{35Q35, 76B03, 76C05}
\keywords{ Euler equations, V-states,  bifurcation diagram}
\begin{document}

\begin{abstract}
In this paper we study for the incompressible Euler equations the global structure of the bifurcation diagram for the rotating doubly connected  patches near the degenerate case.  We show that the branches with the same symmetry merge forming a small loop provided that  they are close enough. This confirms the numerical observations done in the recent work \cite{3}.
 \end{abstract}

\newpage 

\maketitle{}
\tableofcontents

\section{Introduction}
During the last few decades an intensive research activity  %mixing experimental, numerical and analytical studies
 has been dedicated to  the study  in fluid dynamics of  relative equilibria, sometimes called steady states or   V-states. These vortical structures have the common  feature to keep their shape without deformation during the motion and they  seem to play a central role in  the emergence of  coherent structures in  turbulent flows at large scales, see for instance  \cite{DR, Kam,Luz, Over, Saf} and the references therein. Notice that from experimental  standpoint, their existence has been revealed in different geophysical phenomena  such as the aerodynamic trailing-vortex problem, the two-dimensional shear layers, Saturn's hexagon, the K\'arm\'an vortex street, and so on.. Several numerical and analytical investigations  have been carried out in various configurations depending on the topological structure of the vortices: single simply or multiply connected vortices, dipolar or multipolar, see for instance \cite{Cor1, Cor22,03,3,H-H-H,Flierl,H-H,6,HMV2,5,H-M}.

In this paper we shall be concerned with some refined global structure of the doubly connected rotating patches for  the two-dimensional incompressible Euler equations.  These equations describe the motion of an ideal fluid  and take the  form,
\begin{equation}\label{Euler}
\left\lbrace
\begin{array}{l}
\partial_t \omega + v \cdot \nabla \omega=0, \text{ }(t,x)\in \R_+ \times \R^2, \\
v=-\nabla^\perp(-\Delta)^{-1}\omega,\\
\omega_{|t=0}=\omega_0
\end{array}
\right.
\end{equation}
 where $v=(v_1,v_2)$ refers to the velocity fields and $\omega$ being its  vorticity  which is defined by  the scalar  $\omega= \partial_1 v_2-\partial_2 v_1$.
Note that one can recover the velocity from the vorticity distribution according to the Biot-Savart law,
 \[v(x)=\frac{1}{2 \pi} \int_{\R^2} \frac{(x-y)^{\perp}}{\vert x-y \vert^2} \omega(y)dy.\]
 The global existence and uniqueness of solutions with initial vorticity lying in the space $  L^{1} \cap L^{\infty}$  is a very  classical fact  established many years ago by Yudovich \cite{Y1}. This result has the advantage to allow discontinuous vortices taking the form of vortex patches, that is $\omega_0(x)= \chi_D$ the characteristic function of a bounded   domain $D$. The time evolution of this specific structure is preserved and   the vorticity $\omega(t)$  is uniformly distributed in bounded domain $D_t$, which is nothing but the image by the flow mapping of the initial domain. The regularity of this domain is not an easy task and was solved by Chemin in \cite{2} who proved that a $C^{1+\epsilon}$-boundary keeps this regularity globally in time  without any loss. In general 
the dynamics of the boundary is hard to track and is subject to  the nonlinear  effects created by the induced velocity. Nonetheless,  some  special family of rotating patches  characterized by uniform rotation without  changing the shape are known in the literature and  a lot of implicit examples have been discovered in the last few decades. Note that in this setting   we have explicitly  $D_t=R_{0,\Omega t }D$ where $R_{0,\Omega t }$ is a planar rotation centered at the origin and  with angle $\Omega t$; for the sake of simplicity we  have assumed that the center of rotation is the origin of the frame and the parameter $\Omega$ denotes  the angular velocity of the rotating domains. The first example was discovered very earlier by  Kirchhoff in \cite{Kirc} who showed that  an ellipse of semi-axes $a$ and $b$ rotates about its center uniformly with the angular velocity $\Omega= \frac{ab}{(a^2+b^2)}$. Later, Deem and Zabusky gave in \cite{03} numerical evidence of the existence of the V-states with $m-$fold symmetry for the integers $m\in \{3,4,5\}$.  Few years after, Burbea gave in \cite{1} an analytical proof of the existence  using complex analysis formulation and bifurcation theory. The regularity of the V-states close to Rankine vortices was discussed quite recently  in \cite{Cor1,6}.   We point out that the bifurcation from the ellipses was studied numerically and analytically in \cite{Cor22,H-M,Kam}. All these results are restricted to simply connected domains and the analytical investigation of doubly connected V-states has been initiated with the works \cite{3,HMV2}. To fix the terminology,  a domain $D$ is said   doubly connected if it takes the form  $D=D_1 \setminus D_2$ with $D_1$ and $D_2$  being two simply connected bounded domains satisfying $\overline{D_2}\subset D_1$. The main result of 
\cite{3} which is  deeply connected to the aim of this paper deals with the bifurcation from the annular patches  where  $D=\mathbb{A}_b\equiv \lbrace z; b< \vert z \vert <1 \rbrace$. For the clarity of the discussion we shall recall the main result of \cite{3}.
\begin{theorem}\label{eulerdoubly}
Given $b \in (0,1)$ and  let $m\geq3$ be a positive integer such that,
\begin{equation}\label{deltaX}1+b^m-\frac{(1-b^2)}{2}m<0.
 \end{equation}
Then there exist two curves of doubly connected rotating patches with m-fold symmetry bifurcating from the annulus $\mathbb{A}_b$ at the angular velocities,
 \begin{equation*}
 \Omega_m^{\pm}=\frac{1-b^2}{4} \pm \frac{1}{2m}\sqrt{\Delta_m}
 \end{equation*}
 with
\[
 \Delta_m= \left(\frac{1-b^2}{2}m-1\right)^2 -b^{2m} .
\]

\end{theorem}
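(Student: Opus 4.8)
The plan is to follow the functional-analytic strategy initiated by Burbea \cite{1}, recasting the existence of uniformly rotating doubly connected patches as a bifurcation problem and applying the classical Crandall--Rabinowitz theorem.

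\textbf{Step 1: conformal parametrization and the V-state equations.} I would look for a rotating patch supported on $D=D_1\setminus D_2$, $\overline{D_2}\subset D_1$, close to $\mathbb{A}_b$, describing $\partial D_1,\partial D_2$ through exterior conformal maps $\phi_j:\D^c\to D_j^c$, $j=1,2$, normalized as
\[
\phi_1(w)=w+\sum_{n\geq1}\frac{a_n}{w^{nm-1}},\qquad \phi_2(w)=b\,w+\sum_{n\geq1}\frac{c_n}{w^{nm-1}},
\]
with real coefficients, which is precisely the ansatz forcing the $m$-fold symmetry. Writing $f=(a_n,c_n)_{n\geq1}$ for the perturbation, the requirement that $D$ rotate steadily with angular velocity $\Omega$ --- equivalently, that on each component $\partial D_j$ the relative velocity $v-\Omega\,x^\perp$ be tangent to the boundary --- translates, via the contour dynamics formulation and the Biot--Savart law, into a coupled nonlinear system $\Ff(\Omega,f)=(\Ff_1(\Omega,f),\Ff_2(\Omega,f))=0$ on the unit circle. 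One then checks that $\Ff$ is a well-defined $C^1$ (indeed $C^\infty$) map $\R\times X\to Y$ between suitable Hölder spaces $X,Y$ of $m$-fold symmetric functions, and that $f=0$ (the annulus $\mathbb{A}_b$) solves $\Ff(\Omega,0)=0$ for every $\Omega$.

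\textbf{Step 2: linearization and the spectral equation.} The core computation is the Gateaux derivative $\partial_f\Ff(\Omega,0)$. Expanding the singular integrals and retaining the linear terms, this operator acts \emph{blockwise-diagonally} on the Fourier modes of frequency $nm$: to the $n$-th mode it associates a real $2\times2$ matrix $M_n(\Omega)$, affine in $\Omega$, whose entries are built from $b$ and $b^{nm}$. Hence $\partial_f\Ff(\Omega,0)$ is invertible exactly when $\det M_n(\Omega)\neq0$ for all $n\geq1$, with nontrivial kernel precisely at the zeros of the $\det M_n$. A direct calculation shows $\det M_n(\Omega)$ is quadratic in $\Omega$ with discriminant proportional to $\Delta_{nm}=\big(\tfrac{1-b^2}{2}nm-1\big)^2-b^{2nm}$ and roots $\Omega_{nm}^{\pm}=\tfrac{1-b^2}{4}\pm\tfrac{1}{2nm}\sqrt{\Delta_{nm}}$. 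Under \eqref{deltaX} one has $\tfrac{1-b^2}{2}m-1>b^m>0$, whence $\tfrac{1-b^2}{2}nm-1>b^{nm}$ and $\Delta_{nm}>0$ for every $n\geq1$; moreover an elementary monotonicity check gives that $n\mapsto\Omega_{nm}^+$ is strictly increasing and $n\mapsto\Omega_{nm}^-$ strictly decreasing, so that for $n\geq2$ one has $\Omega_{nm}^-<\Omega_m^-<\tfrac{1-b^2}{4}<\Omega_m^+<\Omega_{nm}^+$. Therefore at $\Omega=\Omega_m^{\pm}$ the only singular block is $M_1$, with one-dimensional kernel, so $\ker\partial_f\Ff(\Omega_m^\pm,0)$ is one-dimensional, spanned by an explicit eigenmode of frequency $m$ with a prescribed ratio between its two components.

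\textbf{Step 3: Fredholmness, codimension one, transversality.} Since the two curves stay separated ($\overline{D_2}\subset D_1$), the off-diagonal coupling in $\partial_f\Ff(\Omega,0)$ has smooth kernels and is compact, while its diagonal part is a fixed Fourier multiplier modulo a compact remainder with $M_n(\Omega)^{-1}$ controlled uniformly in $n$ once $\Omega$ avoids the accumulation set $\{0,\tfrac{1-b^2}{2}\}\cup\{\Omega_{nm}^\pm\}_{n\geq1}$; thus $\partial_f\Ff(\Omega_m^\pm,0)$ is Fredholm of index zero and its range has codimension $1$. It remains to verify the Crandall--Rabinowitz transversality condition $\partial_\Omega\partial_f\Ff(\Omega_m^\pm,0)\,v_m\notin\mathrm{Range}\,\partial_f\Ff(\Omega_m^\pm,0)$; as everything is concentrated in the mode $m$, this reduces to the linear-algebra assertion that $\dot M_1\,v_m\notin\mathrm{Image}\,M_1(\Omega_m^\pm)$, i.e. $\tfrac{d}{d\Omega}\det M_1(\Omega_m^\pm)\neq0$, which holds because $\Delta_m>0$ makes $\Omega_m^\pm$ simple roots of the quadratic. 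Applying the Crandall--Rabinowitz theorem separately at $\Omega_m^+$ and $\Omega_m^-$ then yields the two announced branches of $m$-fold symmetric doubly connected rotating patches emanating from $\mathbb{A}_b$. The principal obstacle is Step 2: extracting the matrices $M_n(\Omega)$ from the contour dynamics equations requires a careful and lengthy expansion of the singular integrals over two interacting boundary components, together with the borderline analysis of \eqref{deltaX} ensuring that exactly the fundamental frequency $m$ --- and no higher harmonic --- is resonant.
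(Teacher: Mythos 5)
Your proposal follows essentially the same route as the proof this paper relies on (the one from \cite{3}, summarized in the introduction and Section \ref{Remind}): conformal parametrization of the two boundaries, reduction to the coupled contour-dynamics equations, identification of the linearized operator at the annulus as a matrix Fourier multiplier with blocks $M_{nm}$, separation of the eigenvalues $\Omega_{nm}^{\pm}$ to get a one-dimensional kernel, and Crandall--Rabinowitz with transversality coming from the simplicity of the roots when $\Delta_m>0$. The steps and the key computations you outline (including the comparison $\Delta_{nm}/n^2>\Delta_m$ for $n\geq2$ and the equivalence of transversality with $\frac{d}{d\Omega}\det M_m(\Omega_m^{\pm})\neq0$) are correct and match the cited argument.
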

We emphasize that the condition \eqref{deltaX}  is required by the transversality assumption, otherwise the eigenvalues $\Omega_m^\pm$  are double and thus the classical theorems in the bifurcation theory such as  Crandall-Rabinowitz theorem \cite{CR} are out of use. The analysis of the  degenerate case corresponding to vanishing discriminant ( in  which case $\Omega_m^+=\Omega_m^-)$  has been explored very recently in \cite{5}.  They proved in particular  that for $m \geq 3$ and $b\in (0,1)$ such that $\Delta_m=0$ there is no bifurcation to $m$-fold V-states. However for $b\in (0,1)\backslash \mathcal{S}$ two-fold V-states still bifurcate from the annulus $\mathbb{A}_b$ where $\mathcal{S}=\{ b_m^*, m\geq3\}$ and $b_m^*$ being the unique solution in the interval $(0,1) $ of the equation
\begin{equation}\label{master1} 1+b^m-\frac{(1-b^2)}{2}m=0. 
\end{equation}
The proof of this result is by no means  non trivial and based on the local structure of the reduced bifurcation equation obtained through the use of Lyapunov-Schmidt reduction. 
Note that according to the numerical experiments done in \cite{3}  two different scenarios for global bifurcation are conjectured. The first one when the eigenvalues $\Omega_m^-$ and $\Omega_m^+$ are far enough  in which case each  branch ends with a singular V-state and the singularity is a corner of \mbox{angle $\frac\pi2$.} For more details about  the  structure of the limiting V-states we refer the reader to  the  Section $9.3$ in \cite{3}.  Nevertheless, in the second scenario where the eigenvalues are close enough there is no singularity formation on the boundary and it seems quite  evident  that there is no spectral gap  and the V-states can be constructed for any $\Omega\in [\Omega_m^-, \Omega_m^+]$, see Fig. \ref{Fig1} taken from \cite{3}. Moreover,  drawing the second Fourier coefficient of each  conformal mapping that parametrize each boundary as done  in \mbox{Fig. \ref{Fig2}} we get a small loop passing through the trivial solution at $\Omega_m^-$ and $\Omega_m^+.$  This suggests  that the  bifurcation curve starting from $\Omega_m^+$  will  return  back   to the trivial solution (annulus) at $\Omega_m^-$.  

 Our main purpose  in this paper is to go further in this study by  checking analytically the second scenario and provide for $m\geq 3$ the global structure of  the bifurcation curves  near the degenerate case. Our  result reads as follows.
\begin{figure}[t!]
\center
\includegraphics[width=0.4\textwidth, clip=true]{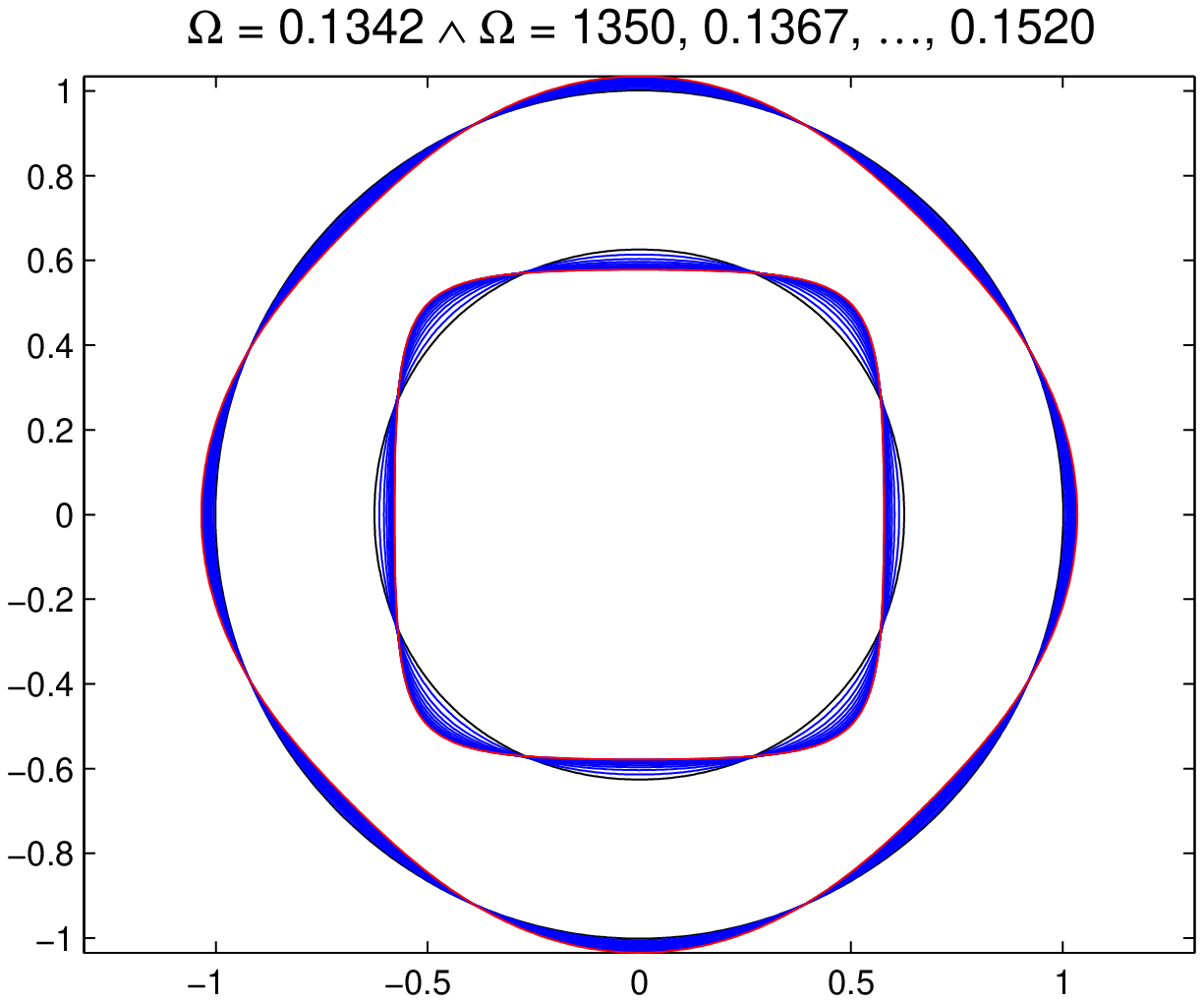}~\includegraphics[width=0.4\textwidth, clip=true]{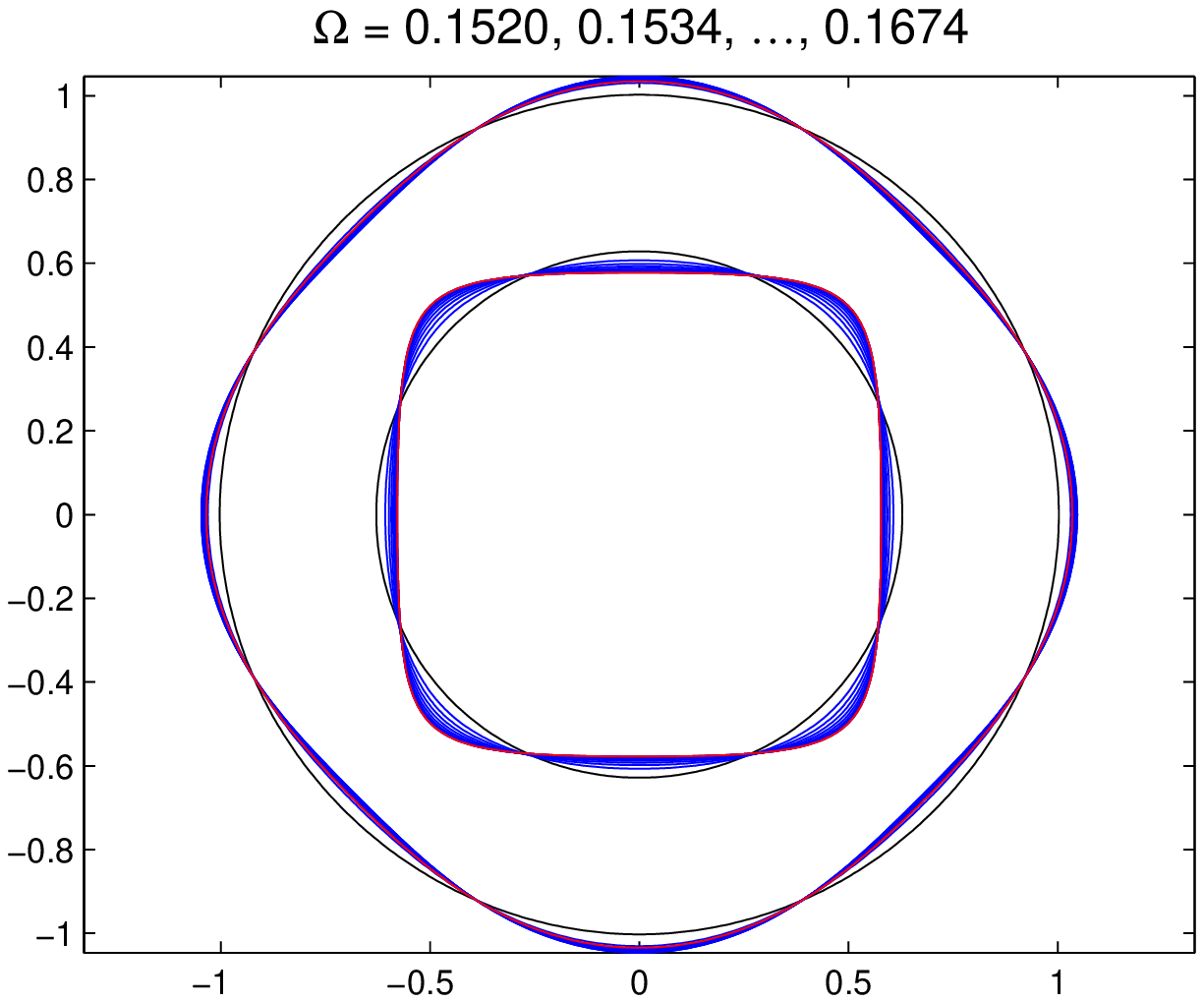}
\caption{Family of 4-fold $V$-states, for $b = 0.63$ and different
$\Omega$. We observe that there is no singularity formation in the boundary.} \label{Fig1}
\end{figure}

\begin{figure}[!htb]
\center
\includegraphics[width=0.4\textwidth, clip=true]{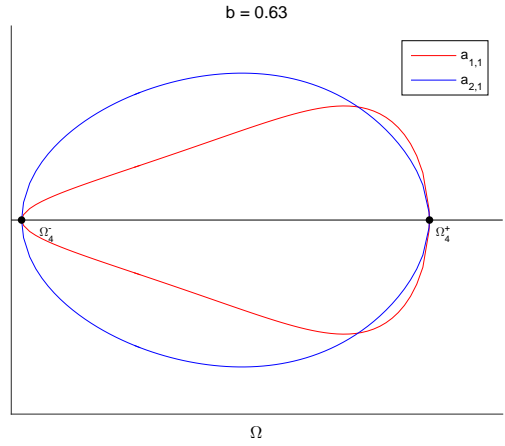}
\caption{\small{Bifurcation curves of the Fourier coefficients $a_{1,1}$ and $a_{2,1}$ in \eqref{conformal1} with respect to $\Omega$.}}
\label{Fig2}
\end{figure}
 
 \begin{theorem}\label{main}
Let  $m\geq 3$ and  $b_m^*$ be the unique solution in $(0,1) $ of the equation \eqref{master1}.
Then there exists $ b_m\in(0,b_m^*)$ such that for any $b$ in $ (b_m,b^*_m)$ the two  curves of $m$-fold V-states given by  Theorem $\ref{eulerdoubly}$ merge and form a  loop.

\end{theorem}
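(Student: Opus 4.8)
The strategy is a two-parameter perturbative analysis around the degenerate point $(b,\Omega)=(b_m^*,\Omega_m^*)$ where $\Omega_m^*:=\frac{1-b^2}{4}$ and the discriminant $\Delta_m$ vanishes. First I would set up the equation defining the $m$-fold doubly connected V-states as a functional equation $F(\Omega,b,f)=0$, where $f=(f_1,f_2)$ encodes the two conformal parametrizations of the boundaries $\partial D_1,\partial D_2$ (as in \eqref{conformal1} and the framework of \cite{3,5}). At $b=b_m^*$ the linearized operator $\partial_f F$ at the annulus has a one-dimensional kernel but fails the Crandall--Rabinowitz transversality condition; the natural tool is therefore a Lyapunov--Schmidt reduction, exactly as performed in \cite{5}. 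This reduces the problem, after splitting off the infinite-dimensional complement where the implicit function theorem applies, to a scalar \emph{bifurcation equation} $g(\Omega,b,t)=0$ with $t\in\R$ a small amplitude parameter, where $g$ is smooth and $g(\Omega,b,0)\equiv 0$. Dividing by $t$ we obtain the reduced equation $G(\Omega,b,t):=g(\Omega,b,t)/t=0$, whose zero set for fixed $b$ close to $b_m^*$ is precisely the bifurcation diagram we want to describe.

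The heart of the matter is then a careful Taylor expansion of $G$ near $(\Omega_m^*,b_m^*,0)$. I expect $G$ to take, after rescaling, the normal form
\[
G(\Omega,b,t)=\big(\Omega-\Omega_m^+\big)\big(\Omega-\Omega_m^-\big)+\alpha\, t^2+\text{(higher order)},
\]
or more precisely a nondegenerate quadratic in $(\Omega-\Omega_m^*)$ and $t$ whose $\Omega$-discriminant is governed by $\Delta_m$. The two bifurcation points $\Omega_m^\pm$ of Theorem~\ref{eulerdoubly} correspond to the two roots in $\Omega$ of $G(\Omega,b,0)=0$ when $\Delta_m>0$, i.e. when $b<b_m^*$. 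The key structural facts to extract from the expansion are: (i) the coefficient of $(\Omega-\Omega_m^*)^2$ is nonzero at the degenerate point (so $G$ is genuinely quadratic in $\Omega$), and (ii) the sign of the coefficient $\alpha$ of $t^2$ relative to that coefficient forces the level set $\{G=0\}$ to be, for $b\in(b_m,b_m^*)$, a closed curve (an ellipse-like loop) in the $(\Omega,t)$-plane joining $(\Omega_m^-,0)$ to $(\Omega_m^+,0)$. Concretely, along this loop $t^2$ is (to leading order) a concave function of $\Omega$ vanishing exactly at $\Omega_m^\pm$ and positive in between, which is the analytic incarnation of the numerically observed loop in Fig.~\ref{Fig2}.

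The main obstacle is the explicit computation of the reduced functional $G$ to second order: one must push the Lyapunov--Schmidt reduction of \cite{5} one order further in the amplitude $t$ while simultaneously tracking the dependence on $\Omega$ and on $b$ near $b_m^*$, since the leading $\Omega$-dependence appears only at the level of the linearized operator but the loop-closing information sits in the $t^2$ coefficient, which involves the second-order corrector and the projection of the quadratic part of $F$. This requires delicate bookkeeping of the Fourier modes (the quadratic nonlinearity couples mode $m$ to modes $0$ and $2m$) and a proof that the relevant coefficient does not vanish at $b=b_m^*$; establishing this nonvanishing, presumably by reducing it to a transcendental inequality in $b_m^*$ and $b^{*m}_m$ via \eqref{master1}, is the crux. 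Once the normal form and the sign of $\alpha$ are secured, choosing $b_m<b_m^*$ close enough that the higher-order terms are dominated (a routine application of the implicit function theorem / quantitative Morse lemma in the parameter $b$) yields the loop, and tracing its endpoints back through the reduction identifies them with the branches of Theorem~\ref{eulerdoubly} at $\Omega_m^\pm$, completing the proof.
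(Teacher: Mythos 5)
Your plan is essentially the paper's proof: Lyapunov--Schmidt reduction to a scalar reduced bifurcation equation, a second-order Taylor expansion whose quadratic part (with $\partial_t F_2=\partial_\lambda\partial_t F_2=0$, positive coefficients of $(\lambda-\lambda_m^+)^2$ and $t^2$, and a small linear-in-$\lambda$ coefficient measuring the distance to degeneracy) has an ellipse as zero set, followed by a perturbation/implicit-function argument showing the true zero set is a $C^1$ Jordan loop whose second intersection with the trivial branch is identified with $\lambda_m^-$ through the local bifurcation diagram of \cite{3}. The only organizational difference is that the paper carries out the reduction at fixed $b\in(b_m,b_m^*)$ around $(\lambda_m^+,0)$ and then rescales the shrinking ellipse to the unit circle so as to apply the implicit function theorem in polar coordinates uniformly in $b$, rather than expanding jointly in $(\Omega,b,t)$ at the degenerate point.
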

Now, we are going to outline the main steps of the  proof. Roughly speaking we start with writing down the equations governing the boundary  $\partial D_1\cup \partial D_2$ of the rotating patches and attempt to follow the approach developed in \cite{3}.  For $j\in\{1,2\}$, let $\Phi_j : \D^c  \rightarrow D_j^c$ be the conformal mapping which enjoys  the following structure,
\begin{equation}\label{conformal1}
\forall\, \vert z \vert \geq 1 \text{, }\Phi_j(z)=b_jz+\sum_{n\in \N} \frac{a_{j,n}}{z^{n}} \text{, } a_{j,n} \in \R \text{, } b_1=1\quad \text{ and } \quad b_2=b.
\end{equation}
 We have denoted by $\D^c$ the complement of the open unit disc $\D$ and we have also assumed  that the Fourier coefficients of the conformal mappings are real which means that we look for  V-states which  have at least one axis of symmetry that can be chosen to be the real axis. It is also important to mention that the domain $D$ is implicitly assumed to be smooth enough, more than $C^1$ as we shall see in the proof, and therefore   each conformal mapping can be extended  up to the boundary. According to the  subsection $\ref{boundary equation}$ the conformal mappings satisfy the coupled equations: for $j \in \lbrace 1,2 \rbrace$
\[G_j(\lambda,f_1,f_2)(w)  \triangleq \textnormal{Im} \left\lbrace \left( (1-\lambda)\overline{\Phi_j(w)}+I(\Phi_j(w))\right) w \Phi_j'(w) \right\rbrace=0  \, ,\forall w \in \T \]
where
\[  \lambda= 1-2 \Omega,\quad \Phi_j(w)=b_j w+f_j({{w}})\]
and
\[I(z) =\frac{1}{2 \num{i}\pi}\int_{\T} \frac{\overline{z}-\overline{\Phi_1(\xi)}}{z-\Phi_1(\xi)} \Phi_1'(\xi) d\xi -\frac{1}{2 \II\pi} \int_{\T} \frac{\overline{z}-\overline{\Phi_2(\xi)}}{z-\Phi_2(\xi)} \Phi_2'(\xi) d\xi.\]
 Here $d\xi$ denotes the complex integration over the unit circle $\T$.
The linearized operator around  the annulus defined through 
\[  \mathcal{L}_{\lambda,b}(h)\triangleq \partial_fG(\lambda,0)h=\frac{d}{dt} [G(\lambda,th)]_{|t=0} \]
 plays a significant role in the proof and according to   \cite{5} it acts as a matrix Fourier multiplier. Actually, 
for  $h=(h_1,h_2)$ chosen  in suitable Banach space with
\[h_j(w)=\sum_{n\geq 1} \frac{a_{j,n}}{w^{nm-1}},\, a_{j,n}\in \R\]
we have the expression
\[ \mathcal{L}_{\lambda,b}(h) =\sum_{n\geq 1}M_{nm}(\lambda) \left(\begin{array}{c}
a_{1,n} \\ 
a_{2,n}
\end{array} \right) e_{nm} \quad \text{ with }\quad e_n(w)= \textnormal{Im}(\overline{w}^n)\]
where for $ n\geq 1$ the matrix $M_n$ is given by
\[ M_n(\lambda)= \left( \begin{array}{cc}
n\lambda-1-nb^2 & b^{n+1} \\ 
-b^n & b(n\lambda-n+1)
\end{array} \right).\]
It is known from \cite{3} that for given $m\geq3$ the values of $b$ such that $M_m(\lambda)$ is singular, for suitable values of $\lambda=\lambda_m^\pm$,  belong to the interval  $(0, b_m^*)$ where $b_m^*$ has been introduced in \eqref{master1}. It is also  shown in that paper that the assumptions of Crandall-Rabinowitz theorem are satisfied, especially the transversality assumption which reduces the bifurcation study to some properties of the linearized operator.  This latter property is no longer true for $b=b_m^*$ and we have double eigenvalues $\lambda_m^\pm=\frac{1+b_m^*}{2}$. This is a degenerate case and we know from  \cite{5} that   there is no bifurcation. It seems that the approach implemented  in this situation can be carried out for  $b\in (0,b_m^*)$ but  close enough to $b_m^*$. 
In fact,  using   Lyapunov-Schmidt reduction (through appropriate projections) we  transform the infinite-dimensional problem into a two dimensional one. Therefore  the V-states equation reduces to the resolution of an  equation   of the type
\[ F_2(\lambda ,t )=0\quad  \text{ with } \quad F_2 : \R^2 \rightarrow \R , \]
with $F_2$ being a smooth function and note that when  $b=b_m^*$ the point $(\lambda_m^\pm,0)$ is a critical point for $F_2$ and for that reason one should expand $F_2$ to the second order around this point  in order to understand the resolvability of the reduced equation. At the order two $F_2$ is strictly convex and therefore locally the critical point is the only solution for $F_2.$ Reproducing this approach  in the current setting and  after  long and involved  computations we find that for $(\lambda,t)$ close enough, for example,  to the solution $(\lambda_m^+,0)$ 

\[ F_2(\lambda,t)=a_m(b) (\lambda-\lambda_m^+)+c_m(b) (\lambda-\lambda_m^+)^2+d_m(b) t^2+\big((\lambda-\lambda_m^+)^2+t^2\big)\varepsilon(\lambda,t)\]
with
\begin{equation*}%\label{varepsi}
\underset{(\lambda,t) \rightarrow (\lambda_m^+,0)}{\lim} \varepsilon(\lambda,t)=0.
\end{equation*}
Notice that
$$
a_m(b_m^*)=0, \quad  c_m(b_m^*)>0,\quad  d_m(b_m^*)>0
$$
and moreover   for $b$ belonging to a small interval  $(b_m, b_m^*)$ we get $a_m(b)>0.$ 
As we can easily check from the preceding facts, the zeros of the associated quadratic form of $F_2$ is  a small ellipse. Therefore by perturbation arguments, we may show that the solutions  of $F_2$ are actually a perturbation in a strong topology of the ellipse. Consequently the solutions of $F_2$ around the point $(\lambda_m^+,0)$ can be parametrized  by a  smooth  Jordan curve.  We remark that in  addition to the known  trivial solution $(\lambda_m^+,0)$ one can find  a second one, of course different from the preceding one,  in the form $(\lambda_m,0)$, corresponding geometrically to the same annulus $\mathbb{A}_b$. Around this point $(\lambda_m,0)$ the obtained curve describes a bifurcating curve of V-states with exactly  $m-$fold symmetry and therefore  from the local description of the bifurcation diagram stated  in Theorem \ref{eulerdoubly}  we deduce that $\lambda_m=\lambda_m^-$. This means the formation of loops as stated in our  main theorem.

The paper is organized as follows. In Section \ref{Remind} we shall introduce some tools, formulate the V-states equations and write down the reduced bifurcation equation using Lyapunov-Schmidt reduction. Section \ref{bifurcationnearthedegeneratecase} is devoted to Taylor expansion at order two of the reduced  bifurcation equation and the complete proof of Theorem \ref{main} will be given  in the last section.

\section{Reminder and preliminaries}\label{Remind}
We shall recall in this section some tools that we shall frequently use throughout the paper and write down the reduced bifurcation equation which is the first step towards the proof of \mbox{Theorem \ref{main}.} But before that we  will  fix  some notations. The unit disc and its boundary will be denoted respectively by $\D$ and $\T$.
and $D_r$  is the planar disc of radius $r$ and centered at the origin. Given 
a continuous function  $f: \T \rightarrow \C$ we define its mean value by,
\[ \fint_{\T} f(\tau) d\tau \triangleq \frac{1}{2\II \pi} \int_{\T} f(\tau) d\tau, \]
where $d\tau$ stands for the complex integration.
\subsection{H\"{o}lder  spaces}
Now we shall introduce  H\"older spaces  on the unit circle $\mathbb{T}$. Let $0<\gamma<1$ we denote by $C^\gamma(\mathbb{T}) $  the space of continuous functions $f$ such that
$$
\Vert f\Vert_{C^\gamma(\mathbb{T})}\triangleq \Vert f\Vert_{L^\infty(\mathbb{T})}+\sup_{\tau\neq w\in \mathbb{T}}\frac{\vert f(\tau)-f(w)\vert}{\vert \tau-w\vert^\alpha}<\infty.
$$
For any integer $n$, the space $C^{n+\gamma}(\mathbb{T})$ stands for the set of functions $f$ of class $C^n$ whose $n-$th order derivatives are H\"older continuous  with exponent $\gamma$. It is equipped with the usual  norm,
$$
\Vert f\Vert_{C^{n+\gamma}(\mathbb{T})}\triangleq \Vert f\Vert_{L^\infty(\mathbb{T})}+\Big\Vert \frac{d^n f}{dw^n}\Big\Vert_{C^\gamma(\mathbb{T})}.
$$ 

Recall that for $n\in\N$, the space $C^n(\T)$ is the set of functions $f$ of class $C^n$ such that,
$$
\|f\|_{C^n(\mathbb{T})}\triangleq \sum_{k=0}^n\|f^{(k)}\|_{L^\infty(\T)}<\infty.
$$
%Now we list some classical properties that will be used later at many places.
%\begin{enumerate}
%\item For $n\in \mathbb{N}, \gamma\in ]0,1[$ the space $C^{n+\gamma}(\mathbb{T})$ is an algebra.
%\item For $K\in L^1(\mathbb{T})$ and $f\in C^{n+\gamma}(\mathbb{T})$ we have the convolution law,
%$$
%\|K*f\|_{C^{n+\gamma}(\mathbb{T})}\le \|K\|_{L^1(\mathbb{T})}\|f\|_{C^{n+\gamma}(\mathbb{T})}.
%$$
%\end{enumerate}

\subsection{Boundary equations}\label{boundary equation}
Let $D_2 \Subset D_1$ be two simply connected domains and $D=D_1\setminus D_2$ be a doubly connected domains. The boundary of $D_j$ will be denoted by $\Gamma_j$. Then according to  \cite{3, 5} we find that that the exterior  conformal mappings $\Phi_1$ and $\Phi_2$ associated to $D_1$ and $D_2$ satisfy the coupled nonlinear equations. For $j\in\{1,2\},$ 
\[\widehat{G}_j(\lambda,\Phi_1,\Phi_2)(w)=0,\quad  \forall \, w \in \T, \]
with
\begin{equation}\label{boundary}
\widehat{G}_j(\lambda,\Phi_1,\Phi_2)(w)  \triangleq \textnormal{Im} \left\lbrace \left( (1-\lambda)\overline{\Phi_j(w)}+I(\Phi_j(w))\right) w \Phi_j'(w) \right\rbrace. 
\end{equation}
 Note that we have introduced $\lambda\triangleq 1-2 \Omega$ because it  is more convenient for the computations and
\[I(z) =\fint_{\T} \frac{\overline{z}-\overline{\Phi_1(\xi)}}{z-\Phi_1(\xi)} \Phi_1'(\xi) d\xi - \fint_{\T} \frac{\overline{z}-\overline{\Phi_2(\xi)}}{z-\Phi_2(\xi)} \Phi_2'(\xi) d\xi.\]
The integrals are defined in the complex sense and we shall  focus on V-states which are small perturbation of the annulus $\mathbb{A}_b=\big\{z, b\le|z|\le 1\big\}$ with $b\in (0,1)$. The conformal mappings  $\Phi_j$ with $j\in\{1,2\}$ admit the expansions,
\[\forall |z|\geq 1,\quad \Phi_j(z)=b_jz+f_j(z)=z + \sum_{n=1}^{+ \infty} \frac{a_{j,n}}{z^n}  \]
with 
\[b_1=1, b_2=b. \]
Define 
\begin{equation}\label{V-state}
G_j(\lambda,f_1,f_2)\triangleq \widehat{G}_j(\lambda,\Phi_1,\Phi_2)
\end{equation}
then the equations of the V-states become,
\[\forall w \in \T \text{, } G(\lambda,f_1,f_2)(w)=0 \]
with
\[G=(G_1,G_2).\]
Note that the annulus is a solution for any angular velocity, that is, 
\[ G(\lambda,0,0)=0 \]
and the set $\lbrace (\lambda,0,0) | \lambda \in \R \rbrace $ will be called  the set of trivial solutions.
\subsection{Reduced bifurcation equation.}\label{Linearized operator and Lyapunov-Schmidt reduction}
For any integer $m \geq 3$,  the existence of V-states was proved  in  \cite{3} provided $b\in(0,b_m^*)$ that guarantee the transversality assumption. The idea is to check  that the functional $G$ has non trivial zeros   using bifurcation arguments. However, and as we have mentioned before in the Introduction, the knowledge  of the  linearized operator around the trivial solution is not enough to understand the structure of the bifurcating curves near the degenerate case corresponding to double eigenvalues. To circumvent this difficulty we make an expansion at order two of the reduced bifurcation equation in the spirit of \cite{5}, and this will be  the subject of the current task. Let us first introduce Banach spaces that we shall use and recall the algebraic structure of the linearize operator.
% We begin recalling its strucure. To capt the structure of the solutions near the degenerate case, we want to use the same idea than in \cite{5} improving the theorem of bifurcation. The basic idea is the Lyapunov-Schmidt reduction which allows to transform the infinite-dimensional problem into a two-dimensional one.
%As in \cite{5}, we will use two appropriate projections and we rewrite the V-state equation as a couple of equations. The first allows us to have local coordinate system and the second will be in the form of:
%\[F_2(\lambda,t)=0 \text{ with } F_2: \R^2 \rightarrow \R \]
%To understand the solution, we must expend  $F_2$ using the Taylor expension around the trivial solution until the second order.
%
For $\alpha\in (0,1)$, we set
 \[X_m= \left\lbrace f=(f_1,f_2) \in (C^{1+\alpha}(\T))^2, f(w)=\sum_{n=1}^{+ \infty} A_n \overline{w}^{nm-1}, A_n \in \R^2 \right\rbrace. \]
 and
 \[Y_m= \left\lbrace G=(G_1,G_2) \in (C^{\alpha}(\T))^2, G=\sum_{n=1}^{+ \infty} B_ne_{nm}, B_n \in \R^2 \right\rbrace \text{, } e_n(w)= \textnormal{Im}(\overline{w}^n). \]
Note that the domains $D_j$ whose  conformal mappings $\Phi_j$ associated to the perturbations $f_j$ lying in $X_m$ are actually  $m-$fold symmetric.
Recall from the subsection \ref{boundary equation} that  the equation of $m-$fold symmetric V-states is given by
\begin{equation}\label{V-state}
G(\lambda,f)=0 \text{, }\quad f=(f_1,f_2)\in B_r^m \times B_r^m \subset X_m
\end{equation}
where $B_r^m$ is the ball given by
$$
B_r^m=\Big\{ f\in C^{1+\alpha}(\mathbb{T}),\, f(w)=\sum_{n=1}^{\infty}a_{n}\overline{w}^{nm-1},\, a_n\in \mathbb{R}, \|f\|_{C^{1+\alpha}}\leq r\Big\}.
$$ 
We mention that we are looking for solutions close to the trivial solutions and therefore the \mbox{radius $r$} will be taken small enough.
The linearized operator around zero is defined by
\[   \partial_fG(\lambda,0)h=\frac{d}{dt} [G(\lambda,th)]_{|t=0}. \]
As it is proved in \cite{5}, for $h=(h_1,h_2)\in X_m$ taking the expansions
\[h_j(w)=\sum_{n\geq 1} \frac{a_{j,n}}{w^{nm-1}},\]
we get the expression
\begin{equation}\label{lin11}
\partial_fG(\lambda,0)h =\sum_{n\geq 1}M_{nm}(\lambda) \left(\begin{array}{c}
a_{1,n} \\ 
a_{2,n}
\end{array} \right) e_{nm},
\end{equation}
where for $ n\geq 1$ the matrix  $M_n$ is given by
\[ M_n(\lambda)= \left( \begin{array}{cc}
n\lambda-1-nb^2 & b^{n+1} \\ 
-b^n & b(n\lambda-n+1)
\end{array} \right).\]
We say throughout this paper  that $\lambda$ is an eigenvalue  if for some $n$ the  matrix $M_n(\lambda)$ is not invertible. Since
\[ \textnormal{det}(M_n(\lambda))=(n\lambda-1-nb^2)b(n\lambda-n+1)+ b^{2n+1}\]
 is a polynomial of second order on the variable $\lambda$, the roots are real if and only if its discriminant is positive. From  \cite{5}  we remind that the roots take the form,
 \[\lambda_n^{\pm}=\frac{1+b^2}{2} \pm \frac{1}{n}\sqrt{\Delta_n(b)}\]
 with the constraint 
 \[\Delta_n(b)= \Big(\frac{1-b^2}{2}n-1\Big)^2 -b^{2n}\geq 0.\] 
 According to \cite{3} this condition is equivalent for $n\geq3$ to
 \begin{equation}\label{equiv11}
n\frac{1-b^2}{2} -1\geq b^n.
\end{equation}
In addition, it is also proved that for any integer $ m \geq 3$ there exists a unique  $ b_m^* \in (0,1)$ such that  $\Delta_m(b_m^*)=0$ and $\Delta_m(b)>0$ for all $b\in [0,b_m^*)$. Moreover,
 \[ \textnormal{Ker}( \partial_fG(\lambda_m^{\pm},0))= \langle v_m \rangle\]
 with 
 \[v_m(w)= \left(\begin{array}{c}
 \frac{m\lambda_m^{\pm}-m+1}{ b^{m-1}} \\ 
1
 \end{array} \right)\overline{w}^{m-1} \triangleq \left( \begin{array}{c}
 v_{1,m} \\ 
 v_{2,m}
 \end{array} \right)\overline{w}^{m-1}. \]
 In order to be  rigorous we could   write $v_m^{\pm}$ but for the sake of simple notations we note simply $v_m$. 
Now we shall   introduce  a complement $\mathcal{X}_m$ of  the subspace $ \langle v_m \rangle$ in the space $X_m$,
\[ \mathcal{X}_m= \left\lbrace h\in (C^{1+\alpha}(\T))^2, h(w)=\sum_{n=2}^{+ \infty} A_n \overline{w}^{nm-1}+ \alpha \left( \begin{array}{c}
 1 \\ 
 0
 \end{array} \right) \overline{w}^{m-1}, A_n \in \R^2 , \alpha \in \R \right\rbrace .\]
It is easy to prove that the subspace is closed  and 
  \[ X_m = \langle v_m \rangle \oplus \mathcal{X}_m.\]
  In addition  the range $ \mathcal{Y}_m$ of $\partial_fG(\lambda_m^{\pm},0)$ in $Y_m$ is given by
 
 \[\mathcal{Y}_m=\left\lbrace K \in (C^{\alpha}(\T))^2, K=\sum_{n=2}^{+ \infty} B_ne_{nm}+\beta \left(\begin{array}{c}
 b^m \\ 
 m\lambda_m^{\pm}-m+1
 \end{array} \right)e_m, B_n \in \R^2 ,\beta \in  \R \right\rbrace . \]
 The subspace $\mathcal{Y}_m$ is of co-dimension one and its complement is a line generated by
 \begin{align*}
 \mathbb{W}_m&= \frac{1}{\sqrt{( m\lambda_m^{\pm}-m+1)^2+b^{2m}}} \left( \begin{array}{c}
 m\lambda_m^{\pm}-m+1 \\ 
 -b^m
 \end{array} \right) e_m\\
 &\triangleq \widehat{\mathbb{W}}_me_m.
 \end{align*}
 Thus we have 
 \[ Y_m=\langle \mathbb{W}_m \rangle \oplus \mathcal{Y}_m .\]
 Lyapunov-Schmidt reduction relies on  two projections
 \[ P : X_m \rightarrow \langle v_m \rangle,\quad 
 Q: Y_m \rightarrow \langle \mathbb{W}_m \rangle. \]
For a future use we need the explicit expression of the projection $Q.$ 
The Euclidian scalar product of $\R^2$ is denoted by $\langle \, , \, \rangle$ and  for $h \in Y_m$ we have
\[ h =\sum_{n=1}^{+ \infty} B_n e_{nm} \text{, } \quad Qh(w)= \langle B_1, \widehat{\mathbb{W}}_m\rangle \mathbb{W}_m.\]
Moreover, by the definition of $Q$ one has,
\begin{equation}\label{Qnul}
Q \partial_f G(\lambda_m^{\pm},0)=0.
\end{equation}
Unlike the degenerate case, the transversality assumption holds true
\[\partial_{\lambda} \partial_f G(\lambda_m^{\pm},0)v_m \notin \textnormal{Im}(\partial_f G(\lambda_m^{\pm},0))\]
and therefore
\begin{equation}\label{id-q}
Q\partial_{\lambda} \partial_f G(\lambda_m^{\pm},0)v_m\neq 0.
\end{equation}
For $f \in X_m$ we use the decomposition 
\[f=g+k\quad \text{with } \quad g=Pf \quad\text{ and } \quad k=(\text{Id}-P)f.\]
Then the V-state equation is equivalent to the system
\[ F_1(\lambda,g,k) \triangleq (\text{Id}-Q)G(\lambda,g+k)=0 \quad \text{ and } \quad QG(\lambda, g+k)=0. \]
Note that  $F_1: \R  \times \langle v_m \rangle \times \mathcal{X}_m \rightarrow \mathcal{Y}_m$  is well-defined and smooth. Thus  using $(\ref{Qnul})$ we can check the identity,
\[ D_kF_1(\lambda_m^{\pm},0,0)=(\text{Id}-Q)\partial_fG(\lambda_m^{\pm},0)=\partial_fG(\lambda_m^{\pm},0).\]
Consequently
\[  D_kF_1(\lambda_m^{\pm},0,0): \mathcal{X}_m \rightarrow \mathcal{Y}_m\]
 is invertible. The inverse is explicit and is given by the formula
 \begin{equation}\label{Invert11}
\partial_fG(\lambda_m^\pm,0)h=K\Longleftrightarrow \forall n\geq2, A_n=M_{nm}^{-1} B_n\quad  \hbox{and}\quad \alpha=-\frac{\beta}{b^m}(m\lambda_m^\pm-m+1)\cdot
\end{equation}
\label{voisinage} Thus using the implicit function theorem,  the solutions of the equation $F_1(\lambda,g,k)=0$ are locally described around the point  $(\lambda_m^{\pm},0) $ by the parametrization $k=\varphi(\lambda,g)$ with
\[ \varphi : \R \times \langle v_m \rangle \rightarrow \mathcal{X}_m. \]
being a smooth function. Remark that in principle $\varphi$ is locally defined but it can be extended globally to a smooth function still denoted by $\varphi$. Moreover, the resolution of the V-state equation near to $(\lambda_m^{\pm},0) $ is equivalent to 
\begin{equation}\label{F1voisinage}
 QG\big( \lambda , tv_m+ \varphi(\lambda, tv_m)\big)=0.
 \end{equation}
As $G(\lambda,0)=0$, $\forall \lambda$  it follows
\begin{equation}\label{phitriv}
 \varphi (\lambda,0) =0 \text{, } \forall \lambda \in \mathcal{V}(\lambda_m^{\pm}),
\end{equation}
where  $\mathcal{V}(\lambda_m^{\pm})$ is a small  neighborhood of $\lambda_m^{\pm}$.
Using Taylor expansion at order $1$ on the variable $t$ the V-states equation \eqref{F1voisinage} is equivalent to the {\it reduced bifurcation equation}, 
\begin{equation}\label{be}
 F_2(\lambda,t) \triangleq \int_0^1 Q \partial_fG(\lambda,stv_m+ \varphi(\lambda, stv_m))(v_m+ \partial_g \varphi(\lambda, stv_m)v_m) ds=0.
\end{equation}
In addition, using $(\ref{Qnul})$ we remark that
\[ F_2(\lambda_m^{\pm},0)=0.\]
%\section{Bifurcation near the degenerate case}\label{bifurcationnearthedegeneratecase}
\section{Taylor expansion}\label{bifurcationnearthedegeneratecase}
The goal of this section is to compute Taylor expansion of   $F_2$ at the second order. This quadratic form will answer about the local structure of the solutions of the equation \eqref{be}.
% The Crandall Rabinowitz theorem was the key instrument to prove the existence of solutions in \cite{5}. We want to go further the theorem to better understand the structure of the solution. We begin by recalling the Crandall-Rabinowitz theorem which will be denoted by C-R Theorem in the following.
%\begin{theorem}\label{CR}
%Let $X$,$Y$ be two Banach spaces, V a neighborhood of 0 in $X$ and let $F:\R \times X \rightarrow Y$ with the following properties:
%\begin{itemize}
%\item[1] $F(\lambda,0)=0$ for any $\lambda\in \R$.
%\item[2] the partial derivatives $F_{\lambda}$, $F_x$ and $F_{\lambda x}$ exist and are continous.
%\item[3] $\textnormal{Ker}(\mathcal{L}_0)$ and $Y/ \textnormal{Im}(\mathcal{L}_0)$ are one-dimensional.
%\item[4] Transversality assumption: $ \partial_{\lambda}\partial_x F(0,0)x_0 \notin \textnormal{Im}(\mathcal{L}_0)$, where
%\[\textnormal{Ker}(\mathcal{L}_0)=span(x_0) \text{, } \mathcal{L}_0\triangleq \partial_xF(0,0).\]
%\end{itemize}
%If Z is any complement of $\textnormal{Ker}(\mathcal{L}_0)$ in $X$, then there is a neighborhood $U$ of $(0,0)$ in $\R \times X$, an interval $]-a,a[$, and continuous functions $\phi : ]-a,a[ \rightarrow \R$, $\psi :]-a,a[ \rightarrow Z$ such that $\phi(0)=0$,$\psi(0)=0$ and 
%\[F^{-1}(0)\cap U = \lbrace (\phi(\xi),\xi x_0+ \xi \psi(\xi)); \vert \xi \vert <a\rbrace \cup \lbrace (\lambda,0); (\lambda,0) \in U \rbrace .\]
%\end{theorem}
\subsection{General formulae}
The aim of this paragraph is to provide some general results concerning the first and second  derivatives of $\varphi$ and  $F_2.$ First
notice that the transversality assumption required for Crandall-Rabinowitz theorem  is given by 
\[ \partial_\lambda F_2(\lambda_m^{\pm},0)=Q\partial_{\lambda} \partial_f G(\lambda_m^{\pm},0)v_m\neq 0.\]
Thus applying the implicit function theorem to $F_2$, we get in a small neighborhood of $(\lambda_m^{\pm},0)$ a unique curve of solutions $t\in [-\varepsilon_0,\varepsilon_0]\mapsto (\lambda(t),t)$ . We shall prove that for $b$ close enough to $b_m^*$ this curve a smooth Jordan curve and for this aim we need to know the full structure of the quadratic form associated to $F_2.$  The following identities  were proved in  \cite[p 13-14]{5}.
\begin{equation}\label{partialphi}
 \partial_{\lambda} \varphi(\lambda_m^{\pm},0)= \partial_g \varphi(\lambda_m^{\pm},0)v_m=0 .
\end{equation}
and 
\begin{equation}\label{doublephi}
\partial_{\lambda \lambda} \varphi( \lambda_m^{\pm},0)=0 .
\end{equation}
%end{lemma}
%\subsection{General formulae}
Now, we give the expressions of the  coefficients of the quadratic form associated to $F_2$ around the point $(\lambda_m^\pm,0).$ For the proof see \cite[Proposition 2]{5}. 
\begin{proposition}\label{ImplicitSS}
 The following assertions hold true. 
\begin{enumerate}
\item First derivatives:
\[\partial_\lambda F_2(\lambda_m^{\pm},0)= Q \partial_\lambda \partial_f G(\lambda_m^{\pm},0)(v_m)\]
and
\begin{align*}
\partial_t F_2( \lambda_m^{\pm},0) &= \frac{1}{2} Q \partial_{ff} G (\lambda_m^{\pm},0)[v_m,v_m] \\
&=\frac{1}{2}\frac{d^2}{dt^2}[QG(\lambda_m^{\pm},tv_m)]_{|t=0} .
\end{align*}
\item Expression of $\partial_{\lambda \lambda} F_2(\lambda_m^{\pm},0)$:
$$
\partial_{\lambda \lambda} F_2(,0) =-2Q\partial_{\lambda}\partial_fG(\lambda_m^{\pm},0)[\partial_f G(\lambda_m^{\pm},0)]^{-1} (\textnormal{Id}-Q)\partial_{\lambda} \partial_f G(\lambda_m^{\pm},0)v_m
$$

\item Expression of  $\partial_{tt} F_2(\lambda_m^{\pm},0)$:
\[\partial_{tt} F_2(\lambda_m^{\pm},0)= \frac{1}{3} \frac{d^3}{dt^3}[QG(\lambda_m^{\pm},tv_m)]_{| t=0}+ Q\partial_{ff}G(\lambda_m^{\pm},0)[v_m,\widehat{v}_m] \]
with
\begin{align*}
\widehat{v}_m\triangleq 
& \frac{d^2}{dt^2} \left. \varphi(\lambda_m^{\pm},tv_m)\right|_{t=0}\\
=&-[\partial_f G(\lambda_m^{\pm},0)]^{-1} \frac{d^2}{dt^2}  [(\textnormal{Id}-Q)G(\lambda_m^{\pm},tv_m)]_{|t=0}
\end{align*}
and
\[Q\partial_{ff}G(\lambda_m^{\pm},0)[v_m,\widehat{v}_m] =\partial_t\partial_s[ QG(\lambda_m^{\pm}, tv_m+ s\widehat{v}_m)]_{|t=0,s=0}.\]
\item  Expression of  $\partial_{\lambda} \partial_t F_2(\lambda_m^{\pm},0)$:
\begin{align*}
\partial_{\lambda} \partial_t F_2(\lambda_m^{\pm},0)&=\frac{1}{2}Q\partial_{\lambda} \partial_{ff} G(\lambda_m^{\pm},0)[v_m,v_m]+ \frac{1}{2}Q\partial_{\lambda}\partial_f G(\lambda_m^{\pm},0)(\widehat{v}_m)\\
&+ Q\partial_{ff}G(\lambda_m^{\pm},0)[v_m,\partial_{\lambda}\partial_g \varphi( \lambda_m^{\pm},0)v_m]
\end{align*}
with
$$
\partial_{\lambda}\partial_g \varphi( \lambda_m^{\pm},0)v_m=-[\partial_f G(\lambda_m^{\pm},0)]^{-1} (\textnormal{Id}-Q)\partial_{\lambda} \partial_f G(\lambda_m^{\pm},0)v_m
$$
\end{enumerate}
\end{proposition}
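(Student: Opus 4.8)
The plan is to reduce every derivative of $F_2$ at $(\lambda_m^{\pm},0)$ to a $t$-derivative of the composite map
\[
\Psi(\lambda,t)\triangleq QG\big(\lambda,tv_m+\varphi(\lambda,tv_m)\big),
\]
and then to evaluate those by the chain rule. First I would record the master identity $\Psi(\lambda,t)=t\,F_2(\lambda,t)$: since $\Psi(\lambda,0)=QG(\lambda,0)=0$, the fundamental theorem of calculus applied to $\tau\mapsto\Psi(\lambda,\tau)$ together with the substitution $\tau=st$ reproduces exactly the integrand of \eqref{be}, so that $F_2(\lambda,t)=\int_0^1\partial_t\Psi(\lambda,st)\,ds=\Psi(\lambda,t)/t$. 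Matching the Taylor coefficients in $t$ of the two sides of $\Psi=tF_2$ (and then differentiating in $\lambda$, which is legitimate because the identity holds for all $\lambda$ near $\lambda_m^{\pm}$) yields
\begin{align*}
F_2&=\partial_t\Psi, & \partial_\lambda F_2&=\partial_\lambda\partial_t\Psi, & \partial_t F_2&=\tfrac12\partial_{tt}\Psi,\\
\partial_{\lambda\lambda}F_2&=\partial_{\lambda\lambda}\partial_t\Psi, & \partial_{tt}F_2&=\tfrac13\partial_{ttt}\Psi, & \partial_\lambda\partial_t F_2&=\tfrac12\partial_\lambda\partial_{tt}\Psi,
\end{align*}
all evaluated at $(\lambda_m^{\pm},0)$. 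This turns the proposition into the computation of six partial derivatives of $\Psi$.

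Next I would assemble the derivatives of $u(\lambda,t)\triangleq tv_m+\varphi(\lambda,tv_m)$ at the base point. From \eqref{phitriv}, $\varphi(\lambda,0)\equiv0$, whence $\partial_\lambda\varphi(\lambda_m^{\pm},0)=\partial_{\lambda\lambda}\varphi(\lambda_m^{\pm},0)=0$; combined with the second relation in \eqref{partialphi} this gives $\partial_t u=v_m$, $\partial_\lambda u=0$ and $\partial_{\lambda\lambda}u=0$ at $(\lambda_m^{\pm},0)$, while $\partial_{tt}u=\widehat v_m$ and $\partial_\lambda\partial_t u=\partial_\lambda\partial_g\varphi(\lambda_m^{\pm},0)v_m$ hold by definition. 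The two formulas for $\widehat v_m$ and $\partial_\lambda\partial_g\varphi(\lambda_m^{\pm},0)v_m$ are obtained by differentiating the defining constraint $(\mathrm{Id}-Q)G(\lambda,u)=0$ (that is, $F_1\equiv0$) twice in $t$, respectively once in $t$ and once in $\lambda$, evaluating at the base point, and solving the resulting linear equations: since $\mathrm{Im}\,\partial_fG(\lambda_m^{\pm},0)=\mathcal{Y}_m\subset\ker Q$ one has $(\mathrm{Id}-Q)\partial_fG(\lambda_m^{\pm},0)=\partial_fG(\lambda_m^{\pm},0)$, and because $\widehat v_m$ and $\partial_\lambda\partial_g\varphi(\lambda_m^{\pm},0)v_m$ lie in $\mathcal{X}_m$, where $\partial_fG(\lambda_m^{\pm},0)$ is invertible, one may invert it and recover precisely the displayed expressions of parts (3) and (4). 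I would also record the structural fact that $G$ is \emph{affine} in $\lambda$ (the parameter enters \eqref{boundary} only through the factor $1-\lambda$), so that $\partial_{\lambda\lambda}G\equiv0$ and in particular $\partial_{\lambda\lambda}\partial_fG\equiv0$.

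Then I would run the multilinear chain rule on $\Psi=QG(\lambda,u)$, discarding at the base point every term carrying a factor $\partial_\lambda u$ or $\partial_{\lambda\lambda}u$ (both zero), every term of the form $Q\partial_fG(\lambda_m^{\pm},0)[\,\cdot\,]$ (zero by \eqref{Qnul}), and the term $Q\partial_{\lambda\lambda}\partial_fG[\,\cdot\,]$ (zero by $\lambda$-affineness). The surviving pieces are $\partial_t\Psi=Q\partial_fG\,v_m=0$ (consistent with $F_2(\lambda_m^{\pm},0)=0$); $\partial_\lambda\partial_t\Psi=Q\partial_\lambda\partial_fG\,v_m$, giving part (1); $\partial_{tt}\Psi=Q\partial_{ff}G[v_m,v_m]$, giving $\partial_tF_2=\tfrac12Q\partial_{ff}G[v_m,v_m]$; $\partial_{ttt}\Psi=Q\partial_{fff}G[v_m,v_m,v_m]+3Q\partial_{ff}G[v_m,\widehat v_m]$, giving part (3) after the identification $\tfrac{d^3}{dt^3}[QG(\lambda_m^{\pm},tv_m)]_{|0}=Q\partial_{fff}G[v_m,v_m,v_m]$; $\partial_\lambda\partial_{tt}\Psi=Q\partial_\lambda\partial_{ff}G[v_m,v_m]+2Q\partial_{ff}G[v_m,\partial_\lambda\partial_t u]+Q\partial_\lambda\partial_fG[\widehat v_m]$, giving part (4) after multiplying by $\tfrac12$ and using $\partial_\lambda\partial_t u=\partial_\lambda\partial_g\varphi\,v_m$; and finally $\partial_{\lambda\lambda}\partial_t\Psi=2Q\partial_\lambda\partial_fG[\partial_\lambda\partial_t u]$, where substituting the formula for $\partial_\lambda\partial_t u$ yields exactly the expression of part (2). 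The constant factors $\tfrac12,\tfrac13,2$ are tracked by the Leibniz rule.

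The main obstacle is the bookkeeping of the third-order and mixed expansions $\partial_{\lambda\lambda}\partial_t\Psi$ and $\partial_\lambda\partial_{tt}\Psi$: each spawns many multilinear terms, and the content of the proposition is that exactly the right ones cancel. This cancellation rests on three independent facts acting together — $Q\partial_fG(\lambda_m^{\pm},0)=0$, $\partial_\lambda u=\partial_{\lambda\lambda}u=0$ at the base point, and $\partial_{\lambda\lambda}G\equiv0$ — the last being the one that removes the $Q\partial_{\lambda\lambda}\partial_fG\,v_m$ term and thereby collapses $\partial_{\lambda\lambda}F_2$ to the clean second-order form of part (2). A preliminary technical point is the $C^3$-smoothness of the functional $G:\R\times X_m\to Y_m$, and hence of $\varphi$, so that all these derivatives exist, the chain rule applies, and mixed partials commute; this regularity of the singular-integral operator $I$ is of the type established in \cite{3,5} and I would invoke it rather than reprove it.
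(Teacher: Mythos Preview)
Your argument is correct. The paper itself does not prove this proposition; it simply refers the reader to \cite[Proposition 2]{5}, so there is no in-text proof to compare line by line. That said, your approach is exactly the standard one used in \cite{5}: reduce the derivatives of $F_2$ to those of $\Psi(\lambda,t)=QG(\lambda,tv_m+\varphi(\lambda,tv_m))$ via the identity $\Psi=tF_2$, then expand $\Psi$ by the multilinear chain rule and kill terms using $Q\partial_fG(\lambda_m^{\pm},0)=0$, $\partial_\lambda u=\partial_{\lambda\lambda}u=0$, and the affineness of $G$ in $\lambda$. Your derivations of the auxiliary quantities $\widehat v_m$ and $\partial_\lambda\partial_g\varphi(\lambda_m^{\pm},0)v_m$ by differentiating the constraint $(\mathrm{Id}-Q)G(\lambda,u)=0$ and inverting $\partial_fG(\lambda_m^{\pm},0)$ on $\mathcal{X}_m$ are also the intended ones. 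The bookkeeping of the Leibniz factors $\tfrac12,\tfrac13,2$ is right, and your observation that $\partial_{\lambda\lambda}\partial_fG\equiv0$ is precisely what collapses $\partial_{\lambda\lambda}F_2$ to the single term in part (2). Nothing is missing.
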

\subsection{Explicit formula for the quadratic form}\label{jacobianandhessioncomputation}
In this section we want to explicit the terms in the Taylor expansion of $F_2$ at the second order. The main result reads as follows.

\begin{proposition}\label{explicit}
Let $m\geq 3$ and  $b\in (0,b^*_m)$. Then the following assertions hold true.
\begin{enumerate}
\item Expression of $\partial_t F_2( \lambda_m^{\pm},0)$.
\[\partial_t F_2( \lambda_m^{\pm},0)=0.\]
\item Expression of $\partial_\lambda F_2(\lambda_m^{\pm},0)$.
\[\partial_\lambda F_2(\lambda_m^{\pm},0) = \frac{m[(m\lambda_m^{\pm}-m+1)^2-b^{2m}]}{b^{m-1}[(m\lambda_m^{\pm}-m+1)^2+b^{2m}]^{\frac{1}{2}}}\mathbb{W}_m.\]
\item Expression of $\partial_{\lambda \lambda}F_2(\lambda_m^{\pm},0)$.
 \[\partial_{\lambda \lambda} F_2(\lambda_m^{\pm},0)=\frac{4m^2b^{1-m} (m\lambda_m^{\pm}-m+1)^3}{[(m\lambda_m^{\pm}-m+1)^2+b^{2m}]^{\frac{3}{2}}} \mathbb{W}_m . \]
\item Expression of $\partial_{tt} F_2(\lambda_m^{\pm},0)$.
 \begin{align*}
 \partial_{tt} F_2(\lambda_m^{\pm},0)&=-m(m-1)b^{3-3m} \frac{(b^{2m-2}-(m\lambda_m^{\pm}-m+1)^2)^2}{([m\lambda_m^{\pm}-m+1]^2+b^{2m})^{\frac{1}{2}}} \mathbb{W}_m\\
 &+\widetilde{\beta}_m \mathcal{K}_m\mathbb{W}_m
\end{align*}
with 
\begin{eqnarray*}\mathcal{K}_m &\triangleq&\frac{b^{1-m} (m\lambda_m^{\pm}-1)(m\lambda_m^{\pm}-m+1)^2+(1-2m)(m\lambda_m^{\pm}-m+1)b^{m+1}+mb^{3m-1}}{[(m\lambda_m^{\pm}-m+1)^2+b^{2m}]^{\frac{1}{2}}} \\
&\times& (2\lambda_m^{\pm}m-2m+1)
\end{eqnarray*}
and
\[ \widetilde{\beta}_m=-\frac{ 2bm\big( b^{m}-b^{2-m}(m\lambda_m^{\pm}-m+1)\big)^2}{\textnormal{det}(M_{2m}(\lambda_m^{\pm}))}. \]\item Expression of $\partial_{ \lambda}\partial_t F_2(\lambda_m^{\pm},0)$.
\[\partial_{ \lambda}\partial_t F_2(\lambda_m^{\pm},0)=0.\]
\end{enumerate}
\end{proposition}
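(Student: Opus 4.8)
Each of the five identities is obtained by specializing the abstract formulae of Proposition \ref{ImplicitSS} and then inserting the explicit data of Section \ref{Remind}: the Fourier--multiplier form \eqref{lin11} of $\partial_fG(\lambda,0)$ together with $\partial_\lambda M_n(\lambda)=n\,\mathrm{diag}(1,b)$, the explicit inverse \eqref{Invert11}, the formula for the projection $Q$, and the kernel vector $v_m$ with components $v_{1,m}=\tfrac{m\lambda_m^\pm-m+1}{b^{m-1}}$, $v_{2,m}=1$. For (2) and (3) nothing else is needed: both $\partial_fG(\lambda_m^\pm,0)$ and $\partial_\lambda\partial_fG(\lambda_m^\pm,0)$ are diagonal Fourier multipliers, so the computation collapses to the first Fourier block $\overline w^{\,m-1}$. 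Concretely, $\partial_\lambda F_2(\lambda_m^\pm,0)=Q\,\partial_\lambda\partial_fG(\lambda_m^\pm,0)v_m=Q\big(m\,\mathrm{diag}(1,b)(v_{1,m},v_{2,m})^{\mathsf T}e_m\big)$, and pairing with $\widehat{\mathbb{W}}_m$ gives the stated closed form; for $\partial_{\lambda\lambda}F_2$ one uses the composed expression of Proposition \ref{ImplicitSS}(2), computes $(\mathrm{Id}-Q)\partial_\lambda\partial_fG(\lambda_m^\pm,0)v_m\in\mathcal Y_m$, inverts it through the $n=1$ line of \eqref{Invert11} (with $\alpha=-\tfrac{\beta}{b^m}(m\lambda_m^\pm-m+1)$), applies $\partial_\lambda\partial_fG(\lambda_m^\pm,0)$ once more and projects with $Q$; the relation $\det M_m(\lambda_m^\pm)=0$, i.e. $(m\lambda_m^\pm-1-mb^2)(m\lambda_m^\pm-m+1)=-b^{2m}$, is used to bring the ratios into the announced shape.

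\textbf{The vanishing coefficients (1) and (5).} These follow from the fact that $F_2$ is even in $t$. Indeed, the planar rotation by the angle $\pi/m$ maps the set of $m$-fold V-states to itself and preserves $\lambda$; on the conformal parametrizations it acts by $a_{j,n}\mapsto(-1)^na_{j,n}$, in particular $tv_m\mapsto-tv_m$, and on $Y_m$ by $B_n\mapsto(-1)^nB_n$. By uniqueness of $\varphi$ near $(\lambda_m^\pm,0)$, $\varphi(\lambda,-tv_m)$ is the rotate of $\varphi(\lambda,tv_m)$, so $QG(\lambda,tv_m+\varphi(\lambda,tv_m))$ is odd in $t$; since this quantity equals $tF_2(\lambda,t)$ by the Taylor reduction \eqref{be}, $F_2(\lambda,\cdot)$ is even, whence $\partial_tF_2(\lambda_m^\pm,0)=\partial_\lambda\partial_tF_2(\lambda_m^\pm,0)=0$. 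One can also see this directly: the $O(t^2)$ part of $G(\lambda,tv_m)$ is carried by the single block $e_{2m}$, so its $Q$-projection --- which only sees the $e_m$ component of the first block --- vanishes, and the same frequency count kills the $ts$-term relevant to $\partial_\lambda\partial_tF_2$.

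\textbf{The main coefficient (4).} Proposition \ref{ImplicitSS}(3) splits $\partial_{tt}F_2(\lambda_m^\pm,0)=\tfrac13\tfrac{d^3}{dt^3}[QG(\lambda_m^\pm,tv_m)]_{t=0}+Q\partial_{ff}G(\lambda_m^\pm,0)[v_m,\widehat v_m]$. Since the bilinear term $\overline{\Phi_j}\,w\Phi_j'$ in \eqref{boundary} is only quadratic in $f$, the cubic-in-$t$ part of $G(\lambda,tv_m)$ comes entirely from $I(\Phi_j(w))\,w\Phi_j'(w)$, and the quadratic-in-$t$ part (which produces $\widehat v_m$) is governed by $I$ together with the elementary contribution of $\overline{\Phi_j}\,w\Phi_j'$. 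The plan is thus to substitute $\Phi_k(\xi)=b_k\xi+tv_{k,m}\xi^{1-m}$ into the Cauchy-type kernels of $I$, expand $\tfrac{1}{z-\Phi_k(\xi)}$ in geometric series --- distinguishing the diagonal case $k=j$, where a Plemelj / principal-value identity as in \cite{3,5} is needed, from $k\neq j$, where the contour can be deformed --- evaluate the residues, and collect the $w^{\pm m}$ and $w^{\pm 2m}$ monomials up to order three in $t$. The $e_{2m}$-block of the quadratic part gives $\widehat v_m=-M_{2m}^{-1}(\lambda_m^\pm)(\cdots)\,\overline w^{\,2m-1}$ --- which is precisely where $\det M_{2m}(\lambda_m^\pm)$ (nonzero, since $\lambda_m^\pm$ annihilates only $M_m$) enters $\widetilde\beta_m$ --- after which $Q\partial_{ff}G(\lambda_m^\pm,0)[v_m,\widehat v_m]=\partial_t\partial_s[QG(\lambda_m^\pm,tv_m+s\widehat v_m)]_{t=s=0}$ is read off as the $e_m$-coefficient of the bilinear expansion.

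\textbf{Main obstacle.} The delicate and lengthy part is exactly this computation of (4): carrying the residue expansions of the non-local term $I$ to third order, tracking which monomials survive the $Q$-projection, and collapsing the resulting rational functions of $b$ and $\lambda_m^\pm$ into the compact forms $\mathcal K_m$ and $\widetilde\beta_m$ --- in particular recognizing $\det M_{2m}(\lambda_m^\pm)$ in the denominator of $\widetilde\beta_m$ and the square $\big(b^{m}-b^{2-m}(m\lambda_m^\pm-m+1)\big)^2$. This amounts to redoing the degenerate-case computations of \cite[Proposition 2]{5} (done at $b=b_m^*$, where $\lambda_m^+=\lambda_m^-$) with $b$ kept free and the two eigenvalues distinct; the pattern is the same, and $\det M_m(\lambda_m^\pm)=0$ is invoked repeatedly to reach the stated expressions.
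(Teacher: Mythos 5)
Your strategy for items (2) and (3) coincides with the paper's: compute $\partial_\lambda\partial_fG(\lambda_m^{\pm},0)v_m=m(v_{1,m},bv_{2,m})^{\mathsf T}e_m$, pair with $\widehat{\mathbb{W}}_m$, and for $\partial_{\lambda\lambda}F_2$ push $(\mathrm{Id}-Q)\partial_\lambda\partial_fG(\lambda_m^{\pm},0)v_m$ through the first Fourier block of the inverse \eqref{Invert11} before applying $\partial_\lambda\partial_fG$ and $Q$ again. Where you genuinely depart from the paper is in items (1) and (5): the paper obtains them by explicit computation — it shows $\frac{d^2}{dt^2}[G(\lambda_m^{\pm},tv_m)]_{|t=0}$ lives entirely in the $e_{2m}$ block (hence is killed by $Q$), and for (5) it checks the three terms of Proposition \ref{ImplicitSS}(4) one by one, the last cancellation $Q\partial_{ff}G(\lambda_m^{\pm},0)[v_m,\partial_\lambda\partial_g\varphi(\lambda_m^{\pm},0)v_m]=0$ being imported from \cite[Lemma 2-(ii)]{5}. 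Your parity argument (rotation by $\pi/m$ acts by $(-1)^k$ on the $k$-th block, $\varphi$ is equivariant by uniqueness, $QG(\lambda,tv_m+\varphi(\lambda,tv_m))=tF_2(\lambda,t)$ is odd, hence $F_2$ is even in $t$) is a legitimate and cleaner route that yields both vanishings at once; the paper only exploits this rotation at the very end, to prove the symmetry of $\mathcal{E}_b$, and there only at the level of solutions. To make your argument complete you must actually verify the equivariance of the full functional $G$ (including the nonlocal term $I$) and of the Lyapunov--Schmidt splitting under this action — a short but necessary check that your sketch asserts rather than proves.

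The one substantive shortfall is item (4). Your outline (cubic-in-$t$ expansion of the nonlocal term by residues, $\widehat v_m=-M_{2m}^{-1}(\lambda_m^{\pm})(\widehat\alpha,0)^{\mathsf T}\overline w^{\,2m-1}$ producing $\widetilde\beta_m$ through $\det M_{2m}(\lambda_m^{\pm})$, then reading off the $e_m$-coefficient of the bilinear term) is exactly the paper's plan, and it does succeed. But the content of the proposition is the explicit values, and your proposal stops at "redo the computations of \cite{5} with $b$ free": the residue evaluations of the integrals $J_{i,j}$, $K_{i,j}$, $I_k^{i,j}$ and the algebraic collapse to $\mathcal{K}_m$ and to the square $\big(b^m-b^{2-m}(m\lambda_m^{\pm}-m+1)\big)^2$ — which occupy the bulk of the paper's Section 3.2 — are not carried out, so the stated formulas are not actually derived in your argument. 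This is incompleteness of execution rather than a wrong approach, but as written the key coefficient $\partial_{tt}F_2(\lambda_m^{\pm},0)$, on which the whole loop construction rests, remains unverified.
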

\begin{remark}\label{rmq7}
In \cite{5}, all the preceding quantities were computed    in the limit case $b=b^*_m$ and our expressions lead to the same thing when we take $b\rightarrow b^*_m$. This can be checked using the identity
$$
m\lambda_m^{\pm}-m+1=-\sqrt{b^{2m}+\Delta_m}\pm\sqrt{\Delta_m}
$$ 
and when $b=b_m^*$ the discriminant $\Delta_m$ vanishes.
\end{remark}
In what follows we shall establish the formulae of Proposition \ref{explicit}. As we can observe from Proposition \ref{ImplicitSS} that most of them are based on   the quantities $\frac{d^k}{dt^k} [G(\lambda_m^{\pm},tv_m)]_{|t=0}$ for $k\in  \lbrace 2,3\rbrace$. We introduce some notations which will be very useful to obtain explicit expressions.
We begin with:
\[\Phi_j(t,w)=b^{j-1} w+tv_{j,m} \overline{w}^{m-1}\]
wich leads to
\[G_j(\lambda_m^{\pm},tv_m)=\textnormal{Im} \left\lbrace [(1-\lambda_m^{\pm}) \overline{\Phi_j(t,w)}+I(\Phi_j(t,w))]w \left(b^{j-1}+t(1-m)v_{j,m}\overline{w}^m\right)\right\rbrace\]
with:
\[ I(\Phi_j(t,w))=I_1(\Phi_j(t,w))-I_2(\Phi_j(t,w))\]
where:
\[I_i(\Phi_j(t,w))=\fint_{\T} \frac{\overline{\Phi_j(t,w)}-\overline{\Phi_i(t,\tau)}}{\Phi_j(t,w)-\Phi_i(t,\tau)} \Phi_i'(t,\tau) d\tau.\]
\subsubsection{Computation of $\partial_t F_2( \lambda_m^{\pm},0)$}
We shall sketch the proof because most of the computations were done in \cite{5}. 
Note that
\[ \partial_t F_2( \lambda_m^{\pm},0)= \frac{1}{2} Q \partial_{ff} G (\lambda_m^{\pm},0)[v_m,v_m]=\frac{1}{2} Q \frac{d^2}{dt^2}[G_j(\lambda_m^{\pm},tv_m)]_{|t=0}.\]
To lighten the notations we introduce  
\[I_i(\Phi_j(t,w))=\fint_{\T} \frac{\overline{A}+t\overline{B}}{A+tB} (b^{i-1}+tC)d\tau\]
with 
\[A=b^{j-1}w-b^{i-1}\tau \text{, }B=v_{j,m}\overline{w}^{m-1}-v_{i,m}\overline{\tau}^{m-1}\quad \text{ and  }\quad C=v_{i,m}(1-m)\overline{\tau}^m.\]
We can easily find that
\begin{align*}
 \frac{d^2}{dt^2}[G_j(\lambda_m^{\pm},tv_m)]_{|t=0} &=\textnormal{Im}\left\lbrace b^{j-1} w\frac{d^2}{dt^2}I(\Phi_j(t,w))_{|t=0}+2(1-\lambda_m^{\pm})(1-m)v_{j,m}^2 \right.\\
&\left. +2(1-m)v_{j,m}  \frac{d}{dt}  I(\Phi_j(t,w))_{|t=0} \overline{w}^{m-1} \right\rbrace\\
&=\textnormal{Im}\left\lbrace b^{j-1}w \frac{d^2}{dt^2}I(\Phi_j(t,w))_{|t=0}+2(1-m)v_{j,m}  \frac{d}{dt} I(\Phi_j(t,w))_{|t=0} \overline{w}^{m-1} \right\rbrace.
\end{align*}
Recall from \cite[p. 823]{5} that
\[  \frac{d}{dt}  [I_i(\Phi_j(t,w))]_{|t=0}=\fint \frac{\overline{A}}{A^2} (AC-b^{i-1}B)d\tau +b^{i-1}\fint \frac{\overline{B}}{A} d\tau.\]
Moreover, for any $ i,j\in \lbrace 1,2 \rbrace$, there  exist real numbers $ \mu_{i,j},\gamma_{i,j}$  such that 
\[\fint \frac{\overline{B}}{A} d\tau=\mu_{i,j} w^{m-1}\]
and

\[\fint \frac{\overline{A}}{A^2} (AC-b^{i-1}B)d\tau=\gamma_{i,j}\overline{w}^{m+1}.\]

Hence,
\[ \frac{d}{dt}[ I_i(\Phi_j(t,w))]_{|t=0}=\gamma_{i,j}\overline{w}^{m+1}+b^{i-1}\mu_{i,j} w^{m-1}\]
with 
\[\gamma_{i,j} \triangleq v_{i,m}(1-m)\fint \frac{b^{j-1}-b^{i-1}\overline{\tau}}{b^{j-1}-b^{i-1}\tau}\overline{\tau}^m d\tau-b^{i-1}\fint \frac{b^{j-1}-b^{i-1}\overline{\tau}}{(b^{j-1}-b^{i-1}\tau)^2}(v_{j,m}-v_{i,m}\overline{\tau}^{m-1})d\tau.\]
We also get from $(39)$ of  \cite{5}, 
\[\gamma_{1,2}=0.\]
For $\gamma_{2,1}$ by writing 
\[\gamma_{2,1}=v_{2,m}(1-m)\fint \frac{1-b\overline{\tau}}{1-b\tau}\overline{\tau}^m d\tau-b\fint \frac{1-b\overline{\tau}}{(1-b\tau)^2}(v_{1,m}-v_{2,m}\overline{\tau}^{m-1})d\tau. \]
combined with the following identities:  for any $ m \in \N^*$
\begin{equation}\label{puiss1}
\fint_{\T} \frac{\overline{\tau}^m}{(1-b\tau)} d\tau=\fint_{\T} \frac{\tau^{m-1}}{\tau-b}d \tau=b^{m-1}
\end{equation}
and 
\begin{equation}\label{puiss2}
\fint_{\T} \frac{\overline{\tau}^m}{(1-b\tau)^2} d\tau=\fint_{\T} \frac{\tau^{m}}{(\tau-b)^2}d \tau=mb^{m-1}.
\end{equation}
We obtain
\begin{align*}
\gamma_{2,1}&=v_{2,m}(1-m)[b^{m-1}-b^{m+1}]+bv_{2,m}[(m-1)b^{m-2}-mb^m]+b^2 v_{1,m}\\
&=-v_{2,m}b^{m+1}+b^2 v_{1,m}.
\end{align*}
For $\gamma_{i,i}$ we recall that 
\[\gamma_{i,i}=v_{i,m}(1-m)\fint \frac{1-\overline{\tau}}{1-\tau}\overline{\tau}^m d\tau-v_{i,m}\fint \frac{1-\overline{\tau}}{(1-\tau)^2}(1-\overline{\tau}^{m-1})d\tau.\]
Thus using the residue theorem at $\infty$ we deduce,
\[ \gamma_{i,i}=0.\]
Finally we get,

%\begin{align*}
%\frac{d}{dt}[I_1(\Phi_1(t,w))]_{|t=0}&=\mu_{1,1}w^{m-1},\\
%\frac{d}{dt}[I_2(\Phi_1(t,w))]_{|t=0}&=b\mu_{2,1}w^{m-1}+[b^2 v_{1,m}-v_{2,m}b^{m+1}]\overline{w}^{m+1},\\
% \frac{d}{dt} [ I_1(\Phi_2(t,w))]_{|t=0}&=\mu_{1,2}w^{m-1}\\
% \intertext{and}
%  \frac{d}{dt}[ I_2(\Phi_2(t,w))]_{|t=0}&=b\mu_{2,2}w^{m-1}.
%\end{align*}
%
%Then:

 \begin{equation}\label{IdQ1}
 \frac{d}{dt} [ I(\Phi_1(t,w))]_{|t=0}=(\mu_{1,1}-b\mu_{2,1})w^{m-1}+[v_{2,m}b^{m+1}-b^2 v_{1,m}]\overline{w}^{m+1}
 \end{equation}
 and
 \begin{equation}\label{IdQ2}
  \frac{d}{dt} [I(\Phi_2(t,w))]_{|t=0} =(\mu_{1,2}-b\mu_{2,2})w^{m-1}.
\end{equation}
    
Now we have to compute $ \frac{d^2}{dt^2}[ I_i(\Phi_j(t,w))]_{|t=0}$.
According to \cite[p. 825-826]{5} one has 
\[ \frac{d^2}{dt^2}[ I_i(\Phi_j(t,w))]_{|t=0}=2\fint_{\T}\frac{[A\overline{B}-\overline{A}B]}{A^3}[AC-b^{i-1}B]d\tau.\]
Moreover 

\begin{align*}
\fint_{\T} \frac{ \overline{B} }{ A^2 }[AC-b^{i-1} B ]d \tau&=\widehat{\mu}_{i,j}\overline{w}\\
\intertext{ and} 
 -\fint_{\T}\frac{\overline{A}B}{A^3}[AC-b^{i-1}B]d\tau &=\eta_{i,j} \overline{w}^{2m+1}.
\end{align*}
 Then we have
 \[\frac{d^2}{dt^2} [I_i(\Phi_j(t,w))]_{|t=0}=2\widehat{\mu}_{i,j}\overline{w}+2\eta_{i,j} \overline{w}^{2m+1}\]
  with 
 \[ \widehat{\mu}_{i,j}=v_{i,m} (1-m)\fint_{\T} \frac{(v_{j,m}-v_{i,m}\tau^{m-1} ) }{ (b^{j-1}-b^{i-1}   \tau )} \overline{\tau}^m d\tau +\fint_{\T} \frac{(v_{j,m}-v_{i,m}\tau^{m-1})}{(b^{j-1} -b^{i-1}  \tau )^2 }b^{i-1}[v_{i,m}\overline{\tau}^{m-1}-v_{j,m}] d\tau \]
 and
 \begin{eqnarray*}\eta_{i,j}&=&\fint_{\T}\frac{(b^{j-1}  -b^{i-1} \overline{\tau} )(v_{j,m}-v_{i,m} \overline{\tau}^{m-1})}{(b^{j-1} -b^{i-1}\tau )^2}v_{i,m}(m-1)\overline{\tau}^m d \tau\\
 &+&b^{i-1}\fint_{\T}\frac{(b^{j-1}  -b^{i-1} \overline{\tau})(v_{j,m} -v_{i,m}\overline{\tau}^{m-1})^2}{(b^{j-1}-b^{i-1}\tau)^3}d\tau.
 \end{eqnarray*}
For the diagonal terms  we get
\begin{eqnarray*}
\widehat{\mu}_{i,i}&=&\frac{v_{i,m}^2}{b^{i-1}} (1-m)\fint_{\T} \frac{(1- \tau^{m-1} ) }{ (1-   \tau )} \overline{\tau}^m d\tau +\frac{v_{i,m}^2}{b^{i-1}}\fint_{\T} \frac{(1-\tau^{m-1})}{(1 -  \tau )^2 }[\overline{\tau}^{m-1}-1] d\tau\\
&=&(m-1)\frac{v_{i,m}^2}{b^{i-1}}
\end{eqnarray*}
As to the term  $\widehat{\mu}_{1,2},$ we may write
 \begin{eqnarray*} \widehat{\mu}_{1,2}&=&v_{1,m} (1-m)\fint_{\T} \frac{(v_{2,m}-v_{1,m}\tau^{m-1} ) }{ (b- \tau )} \overline{\tau}^m d\tau +v_{1,m}\fint_{\T} \frac{(v_{2,m}-v_{1,m}\tau^{m-1})}{(b - \tau )^2 }\overline{\tau}^{m-1} d \tau\\
 & -&v_{2,m} \fint_{\T} \frac{(v_{2,m}-v_{1,m}\tau^{m-1})}{(b - \tau )^2 } d\tau. 
 \end{eqnarray*}
 The first and the second integrals vanish using the residue theorem at $\infty$.  Thus we find   \[ \widehat{\mu}_{1,2}=(m-1) v_{2,m} v_{1,m} b^{m-2}. \]
Concerning the term $\widehat{\mu}_{2,1}$ given by 
 \[ \widehat{\mu}_{2,1}=v_{2,m} (1-m)\fint_{\T} \frac{(v_{1,m}-v_{2,m}\tau^{m-1} ) }{ (1-b \tau )} \overline{\tau}^m d\tau +\fint_{\T} \frac{(v_{1,m}-v_{2,m}\tau^{m-1})}{(1 -b  \tau )^2 }b[v_{2,m}\overline{\tau}^{m-1}-v_{1,m}] d\tau \]
it can be computed using  $(\ref{puiss1})$ and $( \ref{puiss2})$ 
  \begin{eqnarray*} \widehat{\mu}_{2,1}&=&v_{2,m} (1-m)[v_{1,m} b^{m-1}-v_{2,m}]+bv_{2,m}v_{1,m}(m-1)b^{m-2}\\
  &+&b\fint_{\T} \frac{(-v_{2,m}^2-v_{1,m}^2+v_{2,m}v_{1,m}\tau^{m-1} )}{(1 -b  \tau )^2 } d\tau. 
  \end{eqnarray*}
  The last term vanishes thanks to the residue theorem. Finally we have:
   \[ \widehat{\mu}_{2,1}=v_{2,m}^2 (m-1).\]
   
   Now we shall move to the calculation of  $\eta_{i,j}$ for $i,j\in \lbrace 1,2 \rbrace$.
   We start with the term,
    \[\eta_{1,2}=v_{1,m}(m-1)\fint_{\T}\frac{(b - \overline{\tau} )(v_{2,m}-v_{1,m} \overline{\tau}^{m-1})}{(b -\tau )^2}\overline{\tau}^m d \tau +\fint_{\T}\frac{(b  -\overline{\tau})(v_{2,m} -v_{1,m}\overline{\tau}^{m-1})^2}{(b-\tau)^3}d\tau.\]
 Using the residue theorem at $\infty$ we get
       \[\eta_{1,2}=0. \]
 Now we focus on  the term $\eta_{2,1}$ given by 
      
 \[\eta_{2,1}=v_{2,m}(m-1)\fint_{\T}\frac{(1 -b \overline{\tau} )(v_{1,m}-v_{2,m} \overline{\tau}^{m-1})}{(1 -b\tau )^2}\overline{\tau}^m d \tau +b \fint_{\T}\frac{(1 -b \overline{\tau})(v_{1,m} -v_{2,m}\overline{\tau}^{m-1})^2}{(1-b\tau)^3}d\tau.\]
According to   $(\ref{puiss1})$ and $( \ref{puiss2})$ we get
 \begin{eqnarray*}\fint_{\T}\frac{(1 -b \overline{\tau} )(v_{1,m}-v_{2,m} \overline{\tau}^{m-1})}{(1 -b\tau )^2}\overline{\tau}^m d \tau & =&v_{1,m}\Big(mb^{m-1}-(m+1) b^{m+1}\Big)\\
 &+&v_{2,m}\Big(2mb^{2m}-(2m-1)b^{2m-2}\Big).
 \end{eqnarray*}
 Applying the residue theorem,  we can easily prove for any $m \in \N^*$,
 \begin{equation}\label{puiss3}
  \fint_{\T} \frac{\overline{\tau}^m}{(1-b\tau)^3}d\tau= \fint_{\T} \frac{\tau^{m+1}}{(\tau-b)^3} d\tau=\frac{m(m+1)}{2}b^{m-1}.
  \end{equation}
 Thus,
 \begin{align*}
 \fint_{\T}\frac{(1 -b \overline{\tau})(v_{1,m} -v_{2,m}\overline{\tau}^{m-1})^2}{(1-b\tau)^3}d\tau&=v_{1,m}v_{2,m} m\Big((m+1)b^{m}-(m-1)b^{m-2}\Big)-bv_{1,m}^2 \\
 &+v_{2,m}^2(2m-1)\Big((m-1)b^{2m-3}-m b^{2m-1} \Big).
 \end{align*}
It follows  that
  \[\eta_{2,1}=-v_{2,m}^2mb^{2m}-b^2v_{1,m}^2+ v_{1,m}v_{2,m}(m+1)b^{m+1}.\]
 For the diagonal term we write
  \[\eta_{i,i}=\frac{v_{i,m}^2}{b^{i-1} }(m-1)\fint_{\T}\frac{(1  -\overline{\tau} )(1-\overline{\tau}^{m-1})}{(1 -\tau )^2}\overline{\tau}^m d \tau +\frac{v_{i,m}^2}{b^{i-1} } \fint_{\T}\frac{(1  - \overline{\tau})(1 -\overline{\tau}^{m-1})^2}{(1-\tau)^3}d\tau.\]
%  As:
%  \[(1  -\overline{\tau} )(1-\overline{\tau}^{m-1})=(1-\tau)^2 \sum_{k=2}^{m} \overline{\tau}^k\]
%  and
%  \[(1  -\overline{\tau} )(1-\overline{\tau}^{m-1})^2=-(1-\tau)^3 \left(\sum_{k=1}^{m-1} \overline{\tau}^k \right) \left( \sum_{k=2}^{m} \overline{\tau}^k\right).\]
By the residue theorem we get 
  \[ \eta_{i,i}=0.\]
  Putting together the preceding estimates yields
%  \begin{align*}
%  \frac{d^2}{dt^2}  [I_1(\Phi_1(t,w))]_{|t=0}&=2(m-1)v_{1,m}^2\overline{w},\\
%   \frac{d^2}{dt^2} [I_2(\Phi_1(t,w))]_{|t=0}&=2v_{2,m}^2 (m-1)\overline{w}+2[ v_{1,m}v_{2,m}(m+1)b^{m+1}-v_{2,m}^2mb^{2m}-b^2v_{1,m}^2]\overline{w}^{2m+1},\\
%     \frac{d^2}{dt^2}  [I_1(\Phi_2(t,w))]_{|t=0}&=2(m-1) v_{2,m} v_{1,m} b^{m-2}\overline{w}\\
%     \intertext{ and}
%      \frac{d^2}{dt^2} [I_2(\Phi_2(t,w))]_{|t=0}&=2(m-1) \frac{v_{2,m}^2}{b}\overline{w}
%   \end{align*}
% Consequently, we get:
 \begin{eqnarray*}
  \frac{d^2}{dt^2} [ I(\Phi_1(t,w))]_{|t=0}
  &=&2\Big(v_{2,m}^2mb^{2m}+b^2v_{1,m}^2-v_{1,m} v_{2,m}(m+1)b^{m+1}\Big) \overline{w}^{2m+1} \\
  &+&2(m-1)\big(v_{1,m}^2-v_{2,m}^2 \big) \overline{w}
  \end{eqnarray*}
and
$$
  \frac{d^2}{dt^2} [ I(\Phi_2(t,w))]_{|t=0}=2(m-1)v_{2,m}\Big( v_{1,m} b^{m-2}-\frac{v_{2,m}}{b}\Big)\overline{w}.
$$
 Combining these estimates with \eqref{IdQ1} and \eqref{IdQ2} we find successively,
  \begin{align*}
 \frac{d^2}{dt^2}[G_1(\lambda_m^{\pm},tv_m)]_{|t=0} &=\textnormal{Im} \left\lbrace w \frac{d^2}{dt^2} I(\Phi_1(t,w))_{|t=0}+2(1-m)v_{1,m}  \frac{d}{dt}  I(\Phi_1(t,w))_{|t=0} \overline{w}^{m-1} \right\rbrace\\
  %&=\textnormal{Im} \left\lbrace 2(m-1)[v_{1,m}^2-v_{2,m}^2 ] +2[v_{2,m}^2mb^{2m}+b^2v_{1,m}^2-v_{1,m} v_{2,m}(m+1)b^{m+1}] \overline{w}^{2m} \right.   \\
  %& \left. +2(1-m)v_{1,m}(\mu_{1,1}-b\mu_{2,1})+2(1-m)v_{2,m}[v_{2,m}b^{m+1}-b^2 v_{1,m}]\overline{w}^{2m} \right\rbrace\\
 % &=2m(v_{2,m}^2b^{2m}+b^2 v_{1,m}^2-2 v_{1,m}v_{2,m}b^{m+1})e_{2m}\\
  &=2m\big(v_{2,m} b^{m}-b v_{1,m}\big)^2 e_{2m}
\end{align*}
and
\begin{align*}
  \frac{d^2}{dt^2}[G_2(\lambda_m^{\pm},tv_m)]_{|t=0} &=\textnormal{Im}\left\lbrace b w \frac{d^2}{dt^2}I(\Phi_2(t,w))_{|t=0}+2(1-m)v_{2,m}  \frac{d}{dt}  I(\Phi_2(t,w))_{|t=0} \overline{w}^{m-1} \right\rbrace\\
%&=\textnormal{Im}\left\lbrace  2b(m-1)v_{2,m}[ v_{1,m} b^{m-2}-\frac{v_{2,m}}{b}]+2(1-m)v_{2,m}  (\mu_{1,2}-b\mu_{2,2})\right\rbrace\\
&=0.
\end{align*}
This can be written in the form,
\begin{align*}
 \frac{d^2}{dt^2}[G(\lambda_m^{\pm},tv_m)]_{|t=0}= \left(\begin{array}{c}
  2m(v_{2,m} b^{m}-b v_{1,m})^2 \\ 
  0
  \end{array}  \right) e_{2m}.
  \end{align*}
From the structure of the projector $Q$ we get
\begin{align*}
\partial_t F_2( \lambda_m^{\pm},0) &= \frac{1}{2} Q \partial_{ff} G (\lambda_m^{\pm},0)[v_m,v_m]\\
&=\frac{1}{2} Q \frac{d^2}{dt^2} [G(\lambda_m^{\pm},tv_m)]_{|t=0}\\
&=0.
\end{align*}
Hence the first point of the Proposition $\ref{explicit}$ is proved.
\subsubsection{Computation of  $\partial_\lambda F_2(\lambda_m^{\pm},0)$}
From the explicit expression of  $\partial_f G(\lambda_m^{\pm},0)$ it is easy to verify that
\begin{equation}\label{crossvm}
\partial_\lambda \partial_f G(\lambda_m^{\pm},0)(v_m)=m\left(\begin{array}{c}
v_{1,m} \\ 
bv_{2,m}
\end{array}  \right) e_m.
\end{equation}

Thus we have,
\begin{align*}
\partial_\lambda F_2(\lambda_m^{\pm},0)&= Q \partial_\lambda \partial_f G(\lambda_m^{\pm},0)(v_m)\\
&=m\Big\langle \left(\begin{array}{c}
v_{1,m} \\ 
bv_{2,m}
\end{array}  \right), \widehat{\mathbb{W}}_m \Big\rangle\,\, \mathbb{W}_m.
\end{align*}
Straightforward computations lead to
\begin{equation}\label{F_2lambda}
\partial_\lambda F_2(\lambda_m^{\pm},0)=\frac{m[(m\lambda_m^{\pm}-m+1)^2-b^{2m}]}{b^{m-1}[(m\lambda_m^{\pm}-m+1)^2+b^{2m}]} \left(\begin{array}{c}
m\lambda_m^{\pm}-m+1 \\ 
-b^m
\end{array}  \right) e_{m}.
\end{equation}
Thus the second point of the Proposition $\ref{explicit}$ follows.
%\subsection{Hessian Computation}
%This subsection is dedicated to the hessian computation.
  \subsubsection{Computation of $\partial_{\lambda \lambda}F_2(\lambda_m^{\pm},0)$}
 Using  $(\ref{crossvm})$ and$(\ref{F_2lambda})$ we obtain
\begin{align*}
(\hbox{Id}-Q)\partial_{\lambda}\partial_fG(\lambda_m^{\pm},0)v_m&=\frac{2mb(m\lambda_m^{\pm}-m+1)}{(m\lambda_m^{\pm}-m+1)^2+b^{2m}}\left(\begin{array}{c}
b^m \\ 
m\lambda_m^{\pm}-m+1
\end{array}  \right) e_m\\
& = \kappa\left(\begin{array}{c}
b^m \\ 
m\lambda_m^{\pm}-m+1
\end{array}  \right) e_m
\end{align*}
with
\[ \kappa \triangleq \frac{2mb(m\lambda_m^{\pm}-m+1)}{(m\lambda_m^{\pm}-m+1)^2+b^{2m}}\cdot\]
Then by \eqref{Invert11} and the expression of $\partial_\lambda\partial_g\varphi(\lambda_m^\pm,0)$ detailed in Proposition \ref{ImplicitSS}  one gets
\begin{align}\label{phiZZ}
\nonumber\partial_\lambda\partial_g\varphi(\lambda_m^\pm,0)&=-[\partial_fG(\lambda_m^{\pm},0)]^{-1}(\hbox{Id}-Q)\partial_{\lambda}\partial_fG(\lambda_m^{\pm},0)v_m\\
\nonumber&=-\kappa[\partial_fG(\lambda_m^{\pm},0)]^{-1}\left(\begin{array}{c}
b^m \\ 
m\lambda_m^{\pm}-m+1
\end{array}  \right) e_m\\
%&=-\kappa M_m^{-1}\left(\begin{array}{c}
%b^m \\ 
%m\lambda_m^{\pm}-m+1
%\end{array}  \right)\overline{w}^{m-1}\\
%&=\frac{\kappa (m\lambda_m^{\pm}-m+1)}{b^{m}} \left( \begin{array}{c}
%1 \\ 
%0
%\end{array} \right) \overline{w}^{m-1}\\
&=\frac{2mb^{1-m} (m\lambda_m^{\pm}-m+1)^2}{\big(m\lambda_m^{\pm}-m+1\big)^2+b^{2m}}\left( \begin{array}{c}
1 \\ 
0
\end{array} \right) \overline{w}^{m-1}.
\end{align}
%Indeed:
%\[M_m \frac{1}{b}\left( \begin{array}{c}
%0 \\ 
%1
%\end{array} \right)= \left(\begin{array}{c}
%b^m \\ 
%m\lambda_m^{\pm}-m+1
%\end{array}  \right)\]
%and 
%\[ \left(\begin{array}{c}
%0 \\ 
%1
%\end{array} \right) =v_m-\frac{(m\lambda_m^{\pm}-m+1)}{b^{m-1}} \left( \begin{array}{c}
%1 \\ 
%0
%\end{array}\right).\]
%Eventually we get,
%
%\[\partial_{\lambda}\partial_g\varphi(\lambda_m^{\pm},0)v_m \triangleq \left(\begin{array}{c}
%\gamma_{1,m} \\ 
%\gamma_{2,m}
%\end{array}  \right) \overline{w}^{m-1}\]
%
%with :
%\[ \gamma_{1,m}=\frac{2mb^{1-m} (m\lambda_m^{\pm}-m+1)^2}{[(m\lambda_m^{\pm}-m+1)^2+b^{2m}]} \text{ and  }\gamma_{2,m}=0. \]
Consequently,
\[ \partial_{\lambda} \partial_f G(\lambda_m^{\pm},0)[ \partial_{\lambda} \partial_g \varphi(\lambda_m^{\pm},0)v_m]= \frac{2m^2(m\lambda_m^{\pm}-m+1)^2}{(m\lambda_m^{\pm}-m+1)^2+b^{2m}} \left( \begin{array}{c}
1 \\ 
 0
\end{array} \right)e_m.\]
Straightforward computations lead to
\[Q\partial_{\lambda} \partial_f G(\lambda_m^{\pm},0)[ \partial_{\lambda} \partial_g \varphi(\lambda_m^{\pm},0)v_m]=\frac{2m^2 b^{1-m}(m\lambda_m^{\pm}-m+1)^3}{[(m\lambda_m^{\pm}-m+1)^2+b^{2m}]^2} \left( \begin{array}{c}
m\lambda_m^{\pm}-m+1 \\ 
-b^m
\end{array} \right)e_m.\]
Finally we obtain the following expression
\[\partial_{\lambda \lambda} F_2(\lambda_m^{\pm},0)=\frac{4m^2b^{1-m} (m\lambda_m^{\pm}-m+1)^3}{[(m\lambda_m^{\pm}-m+1)^2+b^{2m}]^{\frac{3}{2}}} \mathbb{W}_m. \]
  \subsubsection{Computation of $\partial_{tt} F_2(\lambda_m^{\pm},0)$}
  We mention that most of the computations were done in \cite{5} and so we shall just outline the basic steps. 
  Looking to the formula given in \mbox{Proposition \ref{ImplicitSS}} we need first to  compute $\frac{d^3}{dt^3}  [G(\lambda_m^{\pm},tv_m)]_{|t=0}$. From the identity $(60)$ of \cite{5} we recall that 

\[ \frac{d^3}{dt^3} G_j(\lambda_m^{\pm},tv_m)_{|t=0} = \textnormal{Im} \left\lbrace b^{j-1} w  \frac{d^3}{dt^3}[I(\Phi_j(t,w))]_{|t=0}+3(1-m)v_{j,m}\overline{w}^{m-1} [ \frac{d^2}{dt^2}(I(\Phi_j(t,w)))]_{|t=0} \right\rbrace .\]

It is also shown in \cite[p. 835]{5} that

\[  \frac{d^3}{dt^3} [I_i(\Phi_j(t,w))]_{|t=0}=-6 \fint_{\T} \frac{[\overline{B}A-\overline{A}B]}{A^4}B[AC-b^{i-1}B]d \tau. \]
One can find real numbers $ \widehat{\eta}_{i,j} $  such that 

\[\frac{1}{6}\frac{d^3}{dt^3}[I_i(\Phi_j(t,w))]_{|t=0}=(m-1)J_{i,j}\overline{w}^{m+1}+b^{i-1}K_{i,j}\overline{w}^{m+1}+\widehat{\eta}_{i,j} \overline{w}^{3m+1}\]
where

\[J_{i,j}= v_{i,m}\fint_{\T} \frac{(v_{j,m} -v_{i,m}\overline{\tau}^{m-1})(v_{j,m} -v_{i,m} \tau^{m-1})}{(b^{j-1}-b^{i-1}\tau)^2}\overline{\tau}^m d\tau\]
and
\[K_{i,j}=\fint_{\T} \frac{(v_{j,m} -v_{i,m}\overline{\tau}^{m-1})^2 (v_{j,m} -v_{i,m} \tau^{m-1})}{(b^{j-1} -b^{i-1}\tau)^3} d\tau. \]

To start, we compute $J_{i,j}$. The same proof of $(61)$ of \cite{5} gives 
\[ J_{1,2}=0.\]
For $J_{2,1}$ we use $(\ref{puiss2})$,
\begin{align*}
J_{2,1}&= v_{2,m}\fint_{\T} \frac{(v_{1,m} -v_{2,m}\overline{\tau}^{m-1})(v_{1,m} -v_{2,m} \tau^{m-1})}{(1-b \tau)^2}\overline{\tau}^m d\tau\\
&=v_{2,m}[v_{1,m}^2+v_{2,m}^2] mb^{m-1}+v_{1,m}v_{2,m}^2[(1-2m)b^{2m-2}-1].\\
\end{align*}

As to  $J_{i,i}$ we easily get
\begin{eqnarray*}J_{i,i} &=& \frac{v_{i,m}^3}{b^{2(i-1)}}\fint_{\T} \frac{(1 -\overline{\tau}^{m-1})(1 - \tau^{m-1})}{(1-\tau)^2}\overline{\tau}^m d\tau\\
&=&0.
\end{eqnarray*}

For $K_{1,2}$ we write 

\begin{eqnarray*}K_{1,2}&=&\fint_{\T} \frac{v_{2,m}^3 +v_{1,m}^2v_{2,m} \overline{\tau}^{2m-2}-2v_{2,m}^2 v_{1,m}\overline{\tau}^{m-1}-v_{1,m}^3\overline{\tau}^{m-1}+2v_{1,m}^2v_{2,m}}{(b- \tau)^3} d\tau\\
&-&v_{1,m}v_{2,m}^2\fint_{\T} \frac{\tau^{m-1}}{(b-\tau)^3}d\tau.
\end{eqnarray*}

Using the residue theorem at $\infty$, we can see that all the terms vanish except the last one that can computed also with  the residue theorem.
\[K_{1,2}=\frac{v_{1,m}v_{2,m}^2(m-1)(m-2)}{2}b^{m-3}. \]
For $K_{2,1}$ given by 

\[K_{2,1}=\fint_{\T} \frac{v_{1,m}^3 -v_{1,m}^2 v_{2,m} \tau^{m-1}+2 v_{1,m} v_{2,m}^2 +v_{1,m} v_{2,m}^2 \overline{\tau}^{2m-2}-v_{2,m}^3 \overline{\tau}^{m-1}-2 v_{1,m}^2 v_{2,m}\overline{\tau}^{m-1}}{(1 -b\tau)^3} d\tau.\]

we may use  the residue  theorem combined  with $(\ref{puiss3})$
\begin{align*}
K_{2,1}&=\fint_{\T} \frac{v_{1,m} v_{2,m}^2 \overline{\tau}^{2m-2}-v_{2,m}^3 \overline{\tau}^{m-1}-2 v_{1,m}^2 v_{2,m}\overline{\tau}^{m-1}}{(1 -b\tau)^3} d\tau\\
&=(2v_{1,m}^2+v_{2,m}^2)v_{2,m}\frac{m(1-m)}{2}b^{m-2}+v_{1,m} v_{2,m}^2(m-1)(2m-1)b^{2m-3}.
\end{align*}
As to the diagonal terms $K_{i,i}$ we use residue theorem leading to
\begin{eqnarray*}K_{i,i}&=&\frac{v_{i,m}^3}{b^{3(i-1)}}\fint_{\T} \frac{( 1- \overline{\tau}^{m-1})^2 (1 - \tau^{m-1})}{(1-\tau)^3} d\tau\\
&=&\frac{v_{i,m}^3}{b^{3(i-1)}}\frac{(m-1)(m-2)}{2}
\end{eqnarray*}

%To sum up we have:
%\begin{align*}
%\frac{1}{6}\frac{d^3}{dt^3}  [I_1(\Phi_1(t,w))]_{|t=0}  &=v_{1,m}^3 \frac{(m-1)(m-2)}{2} \overline{w}^{m+1}+\widehat{\eta}_{1,1} \overline{w}^{3m+1},\\
%\frac{1}{6}\frac{d^3}{dt^3}  [I_2(\Phi_1(t,w))]_{|t=0} &=(m-1)[v_{2,m}[v_{1,m}^2+v_{2,m}^2] m b^{m-1}+v_{1,m} v_{2,m}^2 [(1-2m)b^{2m-2}-1]]\overline{w}^{m+1}+\widehat{\eta}_{2,1} \overline{w}^{3m+1}\\
%&+[(2v_{1,m}^2+v_{2,m}^2)v_{2,m}\frac{m(1-m)}{2}b^{m-1}+v_{1,m} v_{2,m}^2(m-1)(2m-1)b^{2m-2}]\overline{w}^{m+1},\\
%\frac{1}{6}\frac{d^3}{dt^3} [I_1(\Phi_2(t,w))]_{|t=0} &=\frac{v_{1,m}v_{2,m}^2(m-1)(m-2)}{2}b^{m-3} \overline{w}^{m+1}+\widehat{\eta}_{1,2} \overline{w}^{3m+1}\\
%\intertext{ and }
%\frac{1}{6}\frac{d^3}{dt^3} [I_2(\Phi_2(t,w))]_{|t=0} &=\frac{v_{2,m}^3}{b^{2}}\frac{(m-1)(m-2)}{2}\overline{w}^{m+1}+\widehat{\eta}_{2,2} \overline{w}^{3m+1}.\\
%\end{align*}
Summing up we find
\begin{eqnarray*}
\frac{1}{6}\frac{d^3}{dt^3}[I(\Phi_1(t,w))]_{|t=0} &=&(m-1)\Big(v_{2,m}^2 v_{1,m}-v_{2,m}^3 \frac{m}{2}b^{m-1}+v_{1,m}^3\frac{m-2}{2}\Big)\overline{w}^{m+1}\\
&+&(\widehat{\eta}_{1,1}- \widehat{\eta}_{2,1})\overline{w}^{3m+1}
\end{eqnarray*}
{ and}
\begin{eqnarray*}
\frac{1}{6}\frac{d^3}{dt^3}  [I(\Phi_2(t,w))]_{|t=0}&=&\frac{v_{2,m}^2(m-1)(m-2)}{2}\Big(v_{1,m}b^{m-3}-\frac{v_{2,m}}{b^{2}}\Big) \overline{w}^{m+1}\\
&+&(\widehat{\eta}_{1,2}- \widehat{\eta}_{2,2})\overline{w}^{3m+1}.
\end{eqnarray*}
This leads to
\begin{eqnarray*}
\frac{d^3}{dt^3} [G_1(\lambda_m^{\pm},tv_m)]_{|t=0}&=& \textnormal{Im} \Big\{  6(m-1)\Big(v_{2,m}^2 v_{1,m}-v_{2,m}^3 \frac{m}{2}b^{m-1}+v_{1,m}^3\frac{m-2}{2}\Big)\overline{w}^{m} .\\
& +&6(\widehat{\eta}_{1,1}- \widehat{\eta}_{2,1})\overline{w}^{3m}+6(1-m)v_{1,m} (m-1)\Big(v_{1,m}^2-v_{2,m}^2 \Big)\overline{w}^{m}\\
& +&6(1-m)v_{1,m}\Big(v_{2,m}^2mb^{2m}+b^2v_{1,m}^2-v_{1,m} v_{2,m}(m+1)b^{m+1}\Big)\overline{w}^{3m} \Big\} \\
%&=&\textnormal{Im} \Big\{ 3(m-1)m[2v_{2,m}^2 v_{1,m}-v_{2,m}^3 b^{m-1}-v_{1,m}^3]\overline{w}^{m} \\
%&+&6(\widehat{\eta}_{1,1}- \widehat{\eta}_{2,1})\overline{w}^{3m}+6(1-m)v_{1,m}[v_{2,m}^2mb^{2m}+b^2v_{1,m}^2-v_{1,m} v_{2,m}(m+1)b^{m+1}] \overline{w}^{3m}\Big\} \\
&=&3m(m-1)\Big(2v_{2,m}^2 v_{1,m}-v_{2,m}^3 b^{m-1}-v_{1,m}^3\Big)e_m+\gamma_1e_{3m}
\end{eqnarray*}
and 
\begin{align*}
 \frac{d^3}{dt^3} [G_2(\lambda_m^{\pm},0) ]_{|t=0} &= \textnormal{Im} \left\lbrace 3v_{2,m}^2(m-1)(m-2)\Big(v_{1,m}b^{m-2}-\frac{v_{2,m}}{b}\Big) \overline{w}^{m}+b(\widehat{\eta}_{1,2}- \widehat{\eta}_{2,2})\overline{w}^{3m} \right.\\
 &\left.+6(1-m)v_{2,m}^2 (m-1)\Big(v_{1,m} b^{m-2}-\frac{v_{2,m}}{b}\Big)\overline{w}^{m} \right\rbrace \\
 &=3v_{2,m}^2(m-1)m\Big(\frac{v_{2,m}}{b}-v_{1,m}b^{m-2}\Big)e_m+\gamma_2 e_{3m}
 \end{align*}
 with $\gamma_j\in\R.$ 
In summary,
\[\frac{d^3}{dt^3} [G(\lambda_m^{\pm},0) ]_{|t=0} =3m(m-1) \left(\begin{array}{c}
2v_{2,m}^2 v_{1,m}-v_{2,m}^3 b^{m-1}-v_{1,m}^3 \\ 
v_{2,m}^2\Big(\frac{v_{2,m}}{b}-v_{1,m}b^{m-2}\Big)
\end{array}  \right) e_m+\left( \begin{array}{c}
\gamma_1 \\ 
\gamma_2
\end{array} \right)e_{3m}.\]

Using the structure of the projector $Q$ we deduce after algebraic cancellations 
%\[\frac{1}{3} \frac{d^3}{dt^3} [QG(\lambda_m^{\pm},tv_m)]_{| t=0}=m(m-1)Q \left(\begin{array}{c}
%2v_{2,m}^2 v_{1,m}-v_{2,m}^3 b^{m-1}-v_{1,m}^3 \\ 
%v_{2,m}^2[\frac{v_{2,m}}{b}-v_{1,m}b^{m-2}]
%\end{array}  \right) e_m\]
%
%\[=m(m-1) \frac{[2v_{2,m}^2 v_{1,m}-v_{2,m}^3 b^{m-1}-v_{1,m}^3][m\lambda_m^{\pm}-m+1]+v_{2,m}^2[v_{1,m}b^{2m-2}-v_{2,m}b^{m-1}]}{[m\lambda_m^{\pm}-m+1]^2+b^{2m}}\left(\begin{array}{c}
%m\lambda_m^{\pm}-m+1 \\ 
%-b^m
%\end{array}  \right) e_m.\]
\[\frac{1}{3} \frac{d^3}{dt^3} [QG(\lambda_m^{\pm},tv_m)]_{| t=0}=-m(m-1)b^{3-3m} \frac{(b^{2m-2}-(m\lambda_m^{\pm}-m+1)^2)^2}{([m\lambda_m^{\pm}-m+1]^2+b^{2m})^{\frac{1}{2}}} \mathbb{W}_m.\]
Now we shall  compute the term,
\[ Q \partial_{ff}G(\lambda_m^{\pm},0)[v_m, \widehat{v}_m]= Q \partial_t\partial_s[ G(\lambda_m^{\pm},tv_m+s\widehat{v}_m)]_{|t=0,s=0}. \]
To find an expression of $\widehat{v}_m$,  we recall that
\[ \frac{d^2}{dt^2} [G(\lambda_m^{\pm},tv_m)]_{|t=0}= \left(\begin{array}{c}
  \widehat{\alpha} \\ 
  0
  \end{array}  \right) e_{2m} \quad \text{  with }\quad  \widehat{\alpha}=2m(v_{2,m} b^{m}-b v_{1,m})^2.\]
Thus,
\begin{align}\label{vm}
\widehat{v}_m&=-[\partial_f G(\lambda_m^{\pm},0)]^{-1} \left(\begin{array}{c}
  \widehat{\alpha} \\ 
  0
  \end{array}  \right) e_{2m}\\
  \nonumber&=-M_{2m}^{-1}\left(\begin{array}{c}
  \widehat{\alpha} \\ 
  0
  \end{array}  \right)  \overline{w}^{2m-1}\\
  \nonumber&= \left(\begin{array}{c}
  \widehat{v}_{1,m} \\ 
  \widehat{v}_{2,m}
  \end{array}  \right) \overline{w}^{2m-1}
  \end{align}
  with 
  \[ \widehat{v}_{1,m}=-\frac{b\widehat{\alpha}( 2m\lambda_m^{\pm}-2m+1)}{\textnormal{det}(M_{2m}(\lambda_m^{\pm}))} \quad \text{ and } \quad\widehat{v}_{2,m}=-\frac{b^{2m} \widehat{\alpha}}{\textnormal{det}(M_{2m}(\lambda_m^{\pm}))}\cdot \]

Finally we can write,
\[\widehat{v}_m=\widetilde{\beta}_m \tilde{v}_m\]
with 
\begin{equation}\label{beta}\widetilde{\beta}_m=-\frac{b\widehat{\alpha}}{\textnormal{det}(M_{2m}(\lambda_m^{\pm}))} 
\end{equation}
\begin{equation*}
 \tilde{v}_m=\left( \begin{array}{c}
2m\lambda_m^{\pm}-2m+1 \\ 
b^{2m-1}
\end{array} \right)\overline{w}^{2m-1}\triangleq \left( \begin{array}{c}
\beta_1 \\ 
\beta_2
\end{array} \right)\overline{w}^{2m-1}.
\end{equation*}
It follows that
\[  Q \partial_t\partial_s[G(\lambda_m^{\pm},tv_m+s\widehat{v}_m)]_{|t=0,s=0}= \tilde{\beta}_mQ \partial_t\partial_s[ G(\lambda_m^{\pm},tv_m+s\tilde{v}_m)]_{|t=0,s=0}.\]
We shall  introduce the functions 
\[\varphi_j(t,s,w)=b^{j-1}w+tv_{j,m}{w}^{1-m}+s\beta_j{w}^{1-2m}\]
and hence
\begin{align*}
G_j(t,s,w)&\triangleq G_j(\lambda,tv_m+s\tilde{v}_m)\\
&=\textnormal{Im} \left\lbrace \left[(1-\lambda_m^{\pm})\overline{\varphi_j(t,s,w)}+I(\varphi_j(t,s,w))\right]w \partial_w\varphi_j(t,s,w)\right \rbrace.
\end{align*}
The following equality can be easily check :
\begin{eqnarray*}
 \partial_t\partial_s[G_j(t,s,w)]_{|t=0,s=0}& =&\textnormal{Im} \Big\{(1-\lambda_m^{\pm})\beta_j v_{j,m}\Big((1-m)w^m+ (1-2m)\overline{w}^m \Big)\Big\}\\
 &+& \textnormal{Im} \Big\{ w b^{j-1} \frac{d^2}{dtds} [I(\varphi_j(t,s,w))]_{|t=0,s=0}\Big\} \\
 &+&\textnormal{Im} \Big\{\beta_j(1-2m)  \partial_t  [I(\varphi_j(t,s,w))]_{|t=0,s=0}\overline{w}^{2m-1} \Big\} \\
&+&\textnormal{Im} \Big\{(1-m)v_{j,m} \overline{w}^{m-1} \partial_s[I(\varphi_j(t,s,w))]_{|t=0,s=0} \Big\}.
\end{eqnarray*}
We write
\begin{align*}
I_i(\varphi_j(t,s,w))&=\fint_{\T}\frac{\overline{\varphi_j(t,s,w)}-\overline{\varphi_i(t,s,\tau)}}{\varphi_j(t,s,w)-\varphi_i(t,s,\tau)}\Big(b^{i-1}+t(1-m)v_{i,m}\overline{\tau}^m+s\beta_i(1-2m)\overline{\tau}^{2m}\Big)d\tau\\
&=\fint_{\T} \frac{\overline{A}+t\overline{B}+s\overline{C}}{A+tB+sC}\big(b^{i-1}+t\,D+s\,E\big)d\tau
\end{align*}
with 
\[A=b^{j-1}w-b^{i-1}\tau \text{, }B=v_{j,m}\overline{w}^{m-1}-v_{i,m}\overline{\tau}^{m-1} \text{, } C=\beta_j\overline{w}^{2m-1}-\beta_i \overline{\tau}^{2m-1}\]
\[D=(1-m)v_{i,m}\overline{\tau}^m\quad \text{ and  } \quad E=(1-2m)\beta_i\overline{\tau}^{2m}.\]
Straightforward computations  lead to
\begin{eqnarray*} \partial_t[I_i(\varphi_j(t,s,w)]_{|t=0,s=0}&=&b^{i-1} \fint_{\T} \frac{A\overline{B}-\overline{A}B}{A^2}d\tau+ \fint_{\T} \frac{\overline{A}D}{A}d\tau\\
&=&\tilde{J}_{i,j} w^{m-1}+ \theta_{i,j} \overline{w}^{m+1}
\end{eqnarray*}
with $\theta_{i,j}\in\R$ and 
\[\tilde{J}_{i,j}= b^{i-1}\fint_{\T} \frac{(v_{j,m} -v_{i,m} \tau^{m-1})}{(b^{j-1} -b^{i-1}\tau)} d\tau. \]

For the diagonal tens one has
\begin{eqnarray*}\tilde{J}_{i,i}&=& v_{i,m}\fint_{\T} \frac{(1 - \tau^{m-1})}{(1 - \tau)} d\tau\\
&=&0
\end{eqnarray*}
On the other hand
\begin{eqnarray*}\tilde{J}_{1,2}&=& \fint_{\T} \frac{(v_{2,m} -v_{1,m} \tau^{m-1})}{(b -\tau)} d\tau\\
&=&v_{1,m}b^{m-1}-v_{2,m}.
\end{eqnarray*}
Using again the residue theorem it is easy to see that,
\[\tilde{J}_{2,1}= b\fint_{\T} \frac{(v_{1,m} -v_{2,m} \tau^{m-1})}{(1 -b\tau)} d\tau=0.\]
Summing up we obtain,
\begin{align*}
\partial_t[I(\varphi_1(t,s,w))]_{|t=0,s=0}&=(\theta_{1,1}-\theta_{2,1})\overline{w}^{m+1}\\
\intertext{ and }
\partial_t[I(\varphi_2(t,s,w))]_{|t=0,s=0}&=(\theta_{1,2}-\theta_{2,2})\overline{w}^{m+1}+(v_{1,m} b^{m-1}-v_{2,m})w^{m-1}.
\end{align*}
For the derivative with respect to $s$ we may write
\begin{eqnarray*}  \partial_s[I_i(\varphi_j(t,s,w))]_{|t=0,s=0}&=&b^{i-1} \fint_{\T} \frac{A\overline{C}-\overline{A}C}{A^2}d\tau+ \fint_{\T} \frac{\overline{A}E}{A}d\tau\\
&=&\widehat{J}_{i,j} w^{2m-1}+ \widehat{\theta}_{i,j} \overline{w}^{2m+1}
\end{eqnarray*}
where
\begin{equation*}
\widehat{J}_{i,j}= b^{i-1}\fint_{\T} \frac{(\beta_j -\beta_i \tau^{2m-1})}{(b^{j-1} -b^{i-1}\tau)} d\tau
\end{equation*}
and
\begin{align*}
\widehat{\theta}_{i,j}&=\fint_{\T} \frac{(b^{j-1} -b^{i-1}\overline{\tau})}{(b^{j-1} -b^{i-1}\tau)}(1-2m)\beta_i\overline{\tau}^{2m} d\tau -b^{i-1} \fint_{\T} \frac{(b^{j-1} -b^{i-1}\overline{\tau})(\beta_j -\beta_i \overline{\tau}^{2m-1})}{(b^{j-1} -b^{i-1}\tau)^2}d\tau.
\end{align*}
It is easy  to check that $\widehat{\theta}_{i,j}\in \R$, $\forall i,j \in \lbrace 1,2 \rbrace$.
Now, we get 
\[\widehat{J}_{i,i}= \beta_i \fint_{\T} \frac{(1 -  \tau^{2m-1})}{(1 - \tau)} d\tau=0.\]
Using  the residue theorem we find
\[\widehat{J}_{1,2}= \fint_{\T} \frac{(\beta_2-\beta_1 \tau^{2m-1})}{(b -\tau)} d\tau=-\beta_2+\beta_1 b^{2m-1}.\]
and
\[\widehat{J}_{2,1}= b\fint_{\T} \frac{(\beta_1 -\beta_2 \tau^{2m-1})}{(1 -b\tau)} d\tau=0.\]
To summarize,
\begin{align*} 
 \partial_s[I(\varphi_1(t,s,w))]_{|t=0,s=0}&=( \widehat{\theta}_{1,1}-\widehat{\theta}_{2,1})\overline{w}^{2m+1} \\
 \intertext{ and}
\partial_s[I(\varphi_2(t,s,w))]_{|t=0,s=0}&=(\beta_1 b^{2m-1}-\beta_2)w^{2m-1}+(\widehat{\theta}_{1,2}-\widehat{\theta}_{2,2})\overline{w}^{2m+1}.
 \end{align*}
 Now we shall move to  the second derivative with respect to $t$ and $s$,
 \begin{align*}
 \frac{d^2}{dsdt}[I_i(\varphi_j(t,s,w))]_{|t=0,s=0}&=-b^{i-1} \fint_{\T} \frac{\overline{B}C}{A^2}d\tau+ \fint_{\T} \frac{\overline{B}E}{A}d\tau-b^{i-1} \fint_{\T} \frac{\overline{C}B}{A^2}d\tau+\fint_{\T}\frac{\overline{C}D}{A}d\tau\\
  &+2 b^{i-1}\fint_{\T} \frac{BC\overline{A}}{A^3}d\tau- \fint_{\T} \frac{BE\overline{A}}{A^2}d\tau-\fint_{\T} \frac{DC\overline{A}}{A^2}d\tau.
  \end{align*}
  By homogeneity, there exist   $ \varepsilon_{i,j} \in \R $  such that,

\[ \frac{d^2}{dsdt}[I_i(\varphi_j(t,s,w)])_{|t=0,s=0}=\varepsilon_{i,j} \overline{w}^{3m+1} -b^{i-1}I_1^{i,j} w^{m-1} -b^{i-1}I_2^{i,j} \overline{w}^{m+1}+I_3^{i,j}\overline{w}^{m+1}+I_4^{i,j} w^{m-1}\]
with 
\begin{align*}
I_1^{i,j}&=\fint_{\T} \frac{(v_{j,m} -v_{i,m}\overline{\tau}^{m-1})(\beta_j -\beta_i \tau^{2m-1})}{(b^{j-1} -b^{i-1}\tau)^2}d\tau,\\
I_2^{i,j}&=\fint_{\T} \frac{(v_{j,m} -v_{i,m} \tau^{m-1})(\beta_j -\beta_i \overline{\tau}^{2m-1})}{(b^{j-1} -b^{i-1}\tau)^2}d\tau,\\
I_3^{i,j}&=(1-2m)\beta_i \fint_{\T} \frac{(v_{j,m} -v_{i,m} \tau^{m-1})}{(b^{j-1} -b^{i-1}\tau)} \overline{\tau}^{2m} d\tau \\
\intertext{ and}
I_4^{i,j}&=(1-m)v_{i,m} \fint_{\T}\frac{(\beta_j -\beta_i \tau^{2m-1})}{(b^{j-1} -b^{i-1}\tau)}\overline{\tau}^m d\tau.
\end{align*}
We intend to   compute all these terms. For the diagonal terms we write

\begin{eqnarray*}I_1^{i,i}&=& \frac{v_{i,m}\beta_i}{b^{2(i-1)}}\fint_{\T} \frac{(1-\overline{\tau}^{m-1})(1 -\tau^{2m-1})}{(1 - \tau)^2}d\tau\\
&=&(1-m)\frac{v_{i,m} \beta_i}{b^{2(i-1)}}\cdot
\end{eqnarray*}

As to the term $I_1^{1,2}$ we have

\[I_1^{1,2}=\fint_{\T} \frac{v_{2,m}\beta_2 }{(b -\tau)^2}d\tau-\fint_{\T}\frac{v_{1,m}\beta_2\overline{\tau}^{m-1}}{(b -\tau)^2}d\tau-\fint_{\T} \frac{v_{2,m} \beta_1 \tau^{2m-1}}{(b -\tau)^2}d\tau+\fint_{\T} \frac{v_{1,m}\beta_1 \tau^{m})}{(b -\tau)^2}d\tau.\]

By the residue theorem we get  
\[I_1^{1,2}=v_{2,m} \beta_1(1-2m)b^{2m-2}+m\beta_1 v_{1,m} b^{m-1}. \]
Now we move to  $I_1^{2,1}$. Residue theorem combined with $(\ref{puiss2})$ implies

\begin{eqnarray*}I_1^{2,1}&=&\fint_{\T} \frac{v_{1,m} \beta_1 }{(1 -b\tau)^2}d\tau+\fint_{\T} \frac{v_{2,m}\beta_2 \tau^{m}}{(1 -b\tau)^2}d\tau-\fint_{\T} \frac{v_{1,m} \beta_2 \tau^{2m-1}}{(1 -b\tau)^2}d\tau-\fint_{\T} \frac{v_{2,m}\beta_1\overline{\tau}^{m-1} }{(1 -b\tau)^2}d\tau\\
&=&-v_{2,m} \beta_1(m-1)b^{m-2}.
\end{eqnarray*}

Moreover,
\begin{eqnarray*}I_2^{i,i}&=&\frac{v_{i,m} \beta_i}{b^{2(i-1)}} \fint_{\T} \frac{(1 -  \tau^{m-1})(1-  \overline{\tau}^{2m-1})}{(1 - \tau)^2}d\tau\\
&=&(1-m)\frac{v_{i,m}\beta_i}{b^{2(i-1)}}.
\end{eqnarray*}

For $I_2^{i,j}$ we use   the change of variable $\tau \to\overline{\tau}$ 
\begin{align*}
I_2^{i,j}&=\fint_{\T} \frac{(v_{j,m} -v_{i,m} \tau^{m-1})(\beta_j -\beta_i \overline{\tau}^{2m-1})}{(b^{j-1} -b^{i-1}\tau)^2}d\tau\\
&=\fint_{\T} \frac{(v_{j,m} -v_{i,m} \overline{\tau}^{m-1})(\beta_j -\beta_i \tau^{2m-1})}{(b^{i-1} -b^{j-1}\tau)^2}d\tau.
\end{align*}
Therefore
\[I_2^{1,2}=\fint_{\T} \frac{(v_{2,m} -v_{1,m} \overline{\tau}^{m-1})(\beta_2 -\beta_1 \tau^{2m-1})}{(1 -b\tau)^2}d\tau.\]
Similarly to  $I_1^{2,1}$ we find 
\[I_2^{1,2}=-v_{1,m}\beta_2(m-1)b^{m-2}.\]
For the term $I_2^{2,1}$ we write 
\[I_2^{2,1}=\fint_{\T} \frac{(v_{1,m} -v_{2,m} \overline{\tau}^{m-1})(\beta_1 -\beta_2 \tau^{2m-1})}{(b - \tau)^2}d\tau.\]
The same computations  for  $I_1^{1,2}$ yield
\[I_2^{2,1}=v_{1,m}\beta_2(1-2m)b^{2m-2}+m\beta_2v_{2,m} b^{m-1}.\]
For the diagonal term $I_3^{i,i}$ we easily get 

\[I_3^{i,i}=(1-2m)\frac{v_{i,m}\beta_i}{b^{i-1}} \fint_{\T} \frac{(1 - \tau^{m-1})}{(1 -\tau)} \overline{\tau}^{2m} d\tau=0.\]
Moreover,
\[I_3^{1,2}=(1-2m)\beta_i \fint_{\T} \frac{(v_{2,m} -v_{1,m} \tau^{m-1})}{(b - \tau)} \overline{\tau}^{2m} d\tau=0.\]

On the other hand, using $(\ref{puiss1})$ we find
\begin{eqnarray*}I_3^{2,1}&=&(1-2m)\beta_2 \fint_{\T} \frac{(v_{1,m} -v_{2,m} \tau^{m-1})}{(1 -b\tau)} \overline{\tau}^{2m} d\tau\\
&=&
(1-2m)\beta_2 \big(v_{1,m}b^{2m-1}-v_{2,m}b^m\big). 
\end{eqnarray*}
Now we move to the last terms $I_4^{i,j}.$ Concerning the diagonal terms, we may write 

\begin{eqnarray*}I_4^{i,i}&=&(1-m)\frac{v_{i,m} \beta_i}{b^{i-1}} \fint_{\T}\frac{(1 - \tau^{2m-1})}{(1 -\tau)}\overline{\tau}^m d\tau\\
&=&(1-m)\frac{v_{i,m} \beta_i}{b^{i-1}}.
\end{eqnarray*}

For $I_4^{1,2}$ we obtain according to the residue theorem

\begin{eqnarray*}I_4^{1,2}&=& (1-m)v_{1,m} \fint_{\T}\frac{\beta_2 \overline{\tau}^m}{(b -\tau)} d\tau-(1-m)v_{1,m} \fint_{\T}\frac{\beta_1 \tau^{m-1}}{(b -\tau)}d\tau\\
&=&(1-m) v_{1,m} \beta_1 b^{m-1}.
\end{eqnarray*}

For the last term, we use  $(\ref{puiss1})$ in order to get
\begin{eqnarray*}I_4^{2,1}&=&(1-m)v_{2,m} \fint_{\T}\frac{(\beta_1 -\beta_2 \tau^{2m-1})}{(1 -b\tau)}\overline{\tau}^m d\tau\\
&=&(1-m)v_{2,m} \beta_1 b^{m-1}.
\end{eqnarray*}

 Putting together the preceding identities we deduce
%\begin{align*}
%  \frac{d^2}{dsdt}[I_1(\varphi_1(t,s,w))]_{|t=0,s=0}&=\varepsilon_{1,1} \overline{w}^{3m+1}  - (1-m)v_{1,m}\beta_1\overline{w}^{m+1}, \\
% \frac{d^2}{dsdt}[I_2(\varphi_1(t,s,w))]_{|t=0,s=0}&=\varepsilon_{2,1} \overline{w}^{3m+1} +v_{2,m} \beta_1(m-1)b^{m-1}  w^{m-1} -[v_{1,m}\beta_2(1-2m)b^{2m-1}+m\beta_2v_{2,m} b^{m}] \overline{w}^{m+1}\\
%  &+(1-2m)\beta_2 [v_{1,m}b^{2m-1}-v_{2,m}b^m] \overline{w}^{m+1}+(1-m)v_{2,m} \beta_1 b^{m-1} w^{m-1}\\
%  &=\varepsilon_{2,1} \overline{w}^{3m+1} +(m-1)v_{2,m} \beta_2 b^m\overline{w}^{m+1},\\
% \frac{d^2}{dsdt}[I_1(\varphi_2(t,s,w))]_{|t=0,s=0}&=\varepsilon_{1,2} \overline{w}^{3m+1} -[v_{2,m} \beta_1(1-2m)b^{2m-2}+m\beta_1 v_{1,m} b^{m-1} ]w^{m-1} \\
% &+v_{1,m}\beta_2(m-1)b^{m-2}\overline{w}^{m+1}+(1-m) v_{1,m} \beta_1 b^{m-1} w^{m-1}\\
% &=\varepsilon_{1,2} \overline{w}^{3m+1} +(1-2m)\beta_1[ v_{1,m} b^{m-1}-v_{2,m} b^{2m-2} ]w^{m-1}\\
% &+v_{1,m}\beta_2(m-1)b^{m-2}\overline{w}^{m+1} \\
% \intertext{ and}
% \frac{d^2}{dsdt}[I_2(\varphi_2(t,s,w))]_{|t=0,s=0}&=\varepsilon_{2,2} \overline{w}^{3m+1} -(1-m)\frac{v_{2,m} \beta_2}{b} w^{m-1} -(1-m)\frac{v_{2m}\beta_2}{b}  \overline{w}^{m+1}+(1-m)\frac{v_{2,m} \beta_2}{b} w^{m-1}\\
% &=\varepsilon_{2,2} \overline{w}^{3m+1} -(1-m)\frac{v_{2m}\beta_2}{b}  \overline{w}^{m+1}.
%  \end{align*}
%  Consequently we have:
  \begin{align*}
 \frac{d^2}{dsdt}[I(\varphi_1(t,s,w))]_{|t=0,s=0}&=(\varepsilon_{1,1}-\varepsilon_{2,1}) \overline{w}^{3m+1}+(m-1)\big(v_{1,m}\beta_1-\beta_2v_{2,m} b^m\big)\overline{w}^{m+1} \\
  \intertext{and }
  \frac{d^2}{dsdt}[I(\varphi_2(t,s,w))]_{|t=0,s=0}&=(\varepsilon_{1,2}-\varepsilon_{2,2}) \overline{w}^{3m+1}+(1-2m)\beta_1 \big(v_{1,m} b^{m-1}-v_{2,m} b^{2m-2} \big)w^{m-1}\\
 & +(m-1)\beta_2\big(v_{1,m}b^{m-2}-\frac{v_{2m}}{b} \big)\overline{w}^{m+1}.
 \end{align*}
Finally we get,
 \begin{eqnarray*}
 \frac{d^2}{dtds} [G_1(t,s,w)]_{|t=0,s=0} &=& \textnormal{Im} \Big\{ (\varepsilon_{1,1}-\varepsilon_{2,1}) \overline{w}^{3m}+(m-1)[v_{1,m}\beta_1-\beta_2v_{2,m} b^m]\overline{w}^{m} \\
 &+&\beta_1(1-2m) (\theta_{1,1}-\theta_{2,1})\overline{w}^{3m} \Big\} \\
& +&\textnormal{Im} \Big\{(\lambda_m^{\pm}-1)m\beta_1 v_{1,m}\overline{w}^m ]+(1-m)v_{1,m}( \widehat{\theta}_{1,1}-\widehat{\theta}_{2,1})\overline{w}^{3m} \Big\}\\
&=&\left((m\lambda_m^{\pm}-1)v_{1,m}\beta_1+(1-m)\beta_2v_{2,m}b^m \right)e_m+\tilde{\gamma_1} e_{3m}\\
&=&\Big((m\lambda_m^{\pm}-1)(m\lambda_m^{\pm}-m+1)(2m\lambda_m^{\pm}-2m+1)b^{1-m} +(1-m)b^{3m-1} \Big)e_m\\
&+&\tilde{\gamma_1} e_{3m}
\end{eqnarray*}
and
 \begin{eqnarray*}
 \frac{d^2}{dtds} [G_2(t,s,w)]_{|t=0,s=0} &=& \textnormal{Im} \Big\{  b(\varepsilon_{1,2}-\varepsilon_{2,2}) \overline{w}^{3m}+(1-2m)\beta_1 [ v_{1,m} b^{m}-v_{2,m} b^{2m-1} ]w^{m} \Big \} \\
&+&\textnormal{Im} \Big\{(m-1)\beta_2[v_{1,m}b^{m-1}-v_{2m}]\overline{w}^{m} +\beta_2(1-2m) (\theta_{1,2}-\theta_{2,2})\overline{w}^{3m}\\
& +&  (1-\lambda_m^{\pm})\beta_2 v_{2,m}[(1-m)w^m+ (1-2m)\overline{w}^m ]+(1-m)v_{2,m} (\beta_1 b^{2m-1}-\beta_2)w^{m} \Big \} \\
& +&\textnormal{Im} \Big\{ \beta_2(1-2m)(v_{1,m} b^{m-1}-v_{2,m})\overline{w}^{m}+(1-m)v_{2,m}(\widehat{\theta}_{1,2}-\widehat{\theta}_{2,2})\overline{w}^{3m} \Big \} \\
&=&\Big([(m\lambda_m^{\pm}-m+1)\beta_2-m\beta_1b^{2m-1}]v_{2,m}+v_{1,m}b^{m-1}[b \beta_1(2m-1)-m\beta_2] \Big) e_m\\
&+&\tilde{\gamma}_2 e_{3m}\\
&=&\Big( \big((1-m)(m\lambda_m^{\pm}-m+1)-m(2\lambda_m^{\pm}m-2m+1)\big)b^{2m-1}  \\
&+& (m\lambda_m^{\pm}-m+1)b (2m\lambda_m^{\pm}-2m+1)(2m-1) \Big) e_m+\tilde{\gamma}_2 e_{3m}.
\end{eqnarray*}

Using the definition of the projector $Q$, we deduce after some computations
\begin{equation*}
Q \frac{d^2}{dtds}G(t,s,w)_{|t=0,s=0}=\mathcal{K}_m\, \mathbb{W}_m
%&=Q\left(\begin{array}{c}
%(m\lambda_m^{\pm}-1)(m\lambda_m^{\pm}-m+1)(2m\lambda_m^{\pm}-2m+1)b^{1-m} +(1-m)b^{3m-1} \\ 
%(m\lambda_m^{\pm}-m+1)[(1-m)b^{2m-1}+b (2m\lambda_m^{\pm}-2m+1)(2m-1)]-m(2\lambda_m^{\pm}m-2m+1)b^{2m-1}
%\end{array}  \right)e_m \\
%&=\left[\frac{(m\lambda_m^{\pm}-m+1)\left[ (m\lambda_m^{\pm}-1)(m\lambda_m^{\pm}-m+1)(2m\lambda_m^{\pm}-2m+1)b^{1-m} +(1-m)b^{3m-1} \right]}{[(m\lambda_m^{\pm}-m+1)^2+b^{2m}]^{\frac{1}{2}}} \right.\\
%& \left. \frac{-b^m \left[(m\lambda_m^{\pm}-m+1)[(1-m)b^{2m-1}+b (2m\lambda_m^{\pm}-2m+1)(2m-1)]-m(2\lambda_m^{\pm}m-2m+1)b^{2m-1} \right]}{[(m\lambda_m^{\pm}-m+1)^2+b^{2m}]^{\frac{1}{2}}} \right] \mathbb{W}_m\\
%&= (2\lambda_m^{\pm}m-2m+1)\frac{[b^{1-m} (m\lambda_m^{\pm}-1)(m\lambda_m^{\pm}-m+1)^2+(1-2m)(m\lambda_m^{\pm}-m+1)b^{m+1}+mb^{3m-1}]}{[(m\lambda_m^{\pm}-m+1)^2+b^{2m}]^{\frac{1}{2}}} \mathbb{W}_m.
\end{equation*}
with
\[\mathcal{K}_m \triangleq (2\lambda_m^{\pm}m-2m+1)\frac{b^{1-m} (m\lambda_m^{\pm}-1)(m\lambda_m^{\pm}-m+1)^2+(1-2m)(m\lambda_m^{\pm}-m+1)b^{m+1}+mb^{3m-1}}{[(m\lambda_m^{\pm}-m+1)^2+b^{2m}]^{\frac{1}{2}}}. \]

%Finally we have:
%\[\partial_{tt} F_2(\lambda_m^{\pm},0)=\frac{C}{[(m\lambda_m^{\pm}-m+1)^2+b^{2m}]}\left(\begin{array}{c}
%m\lambda_m^{\pm}-m+1 \\ 
%-b^m
%\end{array}  \right)e_m\]
%with
%\begin{align*}
%C&=-m\left[(m-1) (b^{2m-2}-(m\lambda_m^{\pm}-m+1)^2)^2+\frac{2b^2 (b^{2m-2}-(m\lambda_m^{\pm}-m+1))^2 }{(2m\lambda_m^{\pm}-2m+1)(2m\lambda_m^{\pm}-2mb^2-1)+b^{4m}} \right. \\
%&\left.\left[(2m\lambda_m^{\pm}-2m+1)\left[(m\lambda_m^{\pm}-m+1)[(m\lambda_m^{\pm}-1)(m\lambda_m^{\pm}-m+1)+(1-2m)b^{2m}]+ b^{4m-2}\right]\right]
%\end{align*}

Eventually, we find 

\[Q \partial_{ff}G(\lambda_m^{\pm},0)[v_m,\partial_{\lambda}\partial_g \varphi(\lambda_m^{\pm},0)v_m]= \widetilde{\beta}_m \mathcal{K}_m\mathbb{W}_m\]

where $\widetilde{\beta}_m$ was defined in \eqref{beta}. This achieves the proof of Proposition \ref{explicit}-$(4)$.
\subsubsection{Computation of $\partial_{ \lambda}\partial_t F_2(\lambda_m^{\pm},0)$}
Now we shall prove the last point of Proposition \ref{explicit}. Recall from Proposition \ref{ImplicitSS} that
\begin{align*}
\partial_{\lambda} \partial_t F_2(\lambda_m^{\pm},0)&=\frac{1}{2}Q\partial_{\lambda} \partial_{ff} G(\lambda_m^{\pm},0)[v_m,v_m]+ \frac{1}{2}Q\partial_{\lambda}\partial_f G(\lambda_m^{\pm},0)(\widehat{v}_m)\\
&+ Q\partial_{ff}G(\lambda_m^{\pm},0)[v_m,\partial_{\lambda}\partial_g \varphi( \lambda_m^{\pm},0)v_m].
\end{align*}

The first term vanishes since
\begin{align*}
\partial_{\lambda} \partial_{ff} G(\lambda_m^{\pm},0)[v_m,v_m]&=\frac{d^2}{dt^2}[ \partial_{\lambda}  G_j(\lambda_m^{\pm},tv_m)]_{| \lambda=\lambda_m^{\pm},t=0}\\
&= -\frac{d^2}{dt^2}_{|t=0} \textnormal{Im} \left\lbrace w \overline{\Phi_j(t,w)}\Phi_j'(t,w) \right\rbrace\\
&=0.
\end{align*}

For the second term we combine \eqref{lin11} with \eqref{vm}
  \[ \partial_{\lambda}\partial_{f}G(\lambda_m^{\pm},0)(\widehat{v}_m)=2m\left( \begin{array}{c}
  \widehat{v}_{1,m} \\ 
  b \widehat{v}_{2,m}
  \end{array} \right) e_{2m}.\]
  Consequently we deduce that 
  \[Q \partial_{\lambda}\partial_{f}G(\lambda_m^{\pm},0)(\widehat{v}_m)=0.\]
Hence we find 
  \[ \partial_{\lambda} \partial_t F_2(\lambda_m^{\pm},0)=Q\partial_{ff}G(\lambda_m^{\pm},0)[v_m,\partial_{\lambda}\partial_g \varphi( \lambda_m^{\pm},0)v_m]. \]
  Now we want to compute 
\[ Q \partial_{ff}G(\lambda_m^{\pm},0)[v_m,\partial_{\lambda}\partial_g \varphi( \lambda_m^{\pm},0)v_m]=Q\partial_t\partial_s[ G(\lambda_m^{\pm},tv_m+s\partial_{\lambda}\partial_g \varphi( \lambda_m^{\pm},0)v_m)]_{|t=0,s=0} .\]
This was done in \cite[Lemma 2-(ii)]{5} combined with \eqref{phiZZ} and \eqref{vm}. We obtain
\[ Q \partial_{ff}G(\lambda_m^{\pm},0)[v_m,\partial_{\lambda}\partial_g \varphi( \lambda_m^{\pm},0)v_m]=0\]
and this completes the proof of the desired result.

\section{Proof of the main theorem}
This section is devoted to the proof of  Theorem $\ref{main}$. To begin, we choose a small neighborhood of $(\lambda_m^+,0)$ in the strong topology of $\R\times X_m$ such that the equation 
$$
F_1(\lambda,  t v_m,k)=0, \quad  k\in   \mathcal{X}_m
$$
admits locally  a unique surface  of solutions parametrized  by 
\[(\lambda,t)\in(\lambda_m^+-\epsilon_0,\lambda_m^+ +\epsilon_0)\times (-\epsilon_0,\epsilon_0)\mapsto k=\varphi(\lambda,t v_m)\in \mathcal{X}_m,
\] with $\epsilon_0>0$ and $\varphi$ being a $C^1$ function and  actually it is of class   $C^k$ for any $k\in \N$. This follows from the fact that the functionals defining the V-states are better than $C^1$ and it could be proved that they are in fact of class $C^k$. Note also  that 
$$
\varphi(\lambda_m^+,0)=0.
$$For more details we refer to the subsection \ref{Linearized operator and Lyapunov-Schmidt reduction}. We recall from that subsection that  the V-states equation is equivalent to,
\begin{equation}\label{F_2}
F_2(\lambda,t)=0 
\end{equation}
with  $(\lambda,t)$ being  in the neighborhood of $(\lambda_m^+,0)$ in $\R^2$ and $F_2:\R^2\to \langle\mathbb{W}_m\rangle$. We intend to prove the following assertion: there exists $b_m\in(0,b_m^*)$ such that for any $b\in (b_m, b_m^*)$ there exists $\varepsilon>0$ such  that the set 
\begin{equation}\label{param23}
\mathcal{E}_b\triangleq\big\{(\lambda,t)\in (\lambda_m^+-\varepsilon, \lambda_m^+ +\varepsilon)\times(-\varepsilon,\varepsilon),\, F_2(\lambda,t)=0\big\}
\end{equation}
 is a $C^1$-Jordan  curve in the complex plane.
 Taylor expansion of $F_2$ around the point $(\lambda_m^+,0)$ at the order two is given by,
\begin{eqnarray*}F_2(\lambda,t)&=&\partial_{\lambda}F_2(\lambda_m^+,0)(\lambda-\lambda_m^+)+\partial_{t}F_2(\lambda_m^+,0) t\\
&+&\frac{1}{2}\partial_{\lambda \lambda} F_2(\lambda_m^+,0)(\lambda-\lambda_m^+)^2+\frac{1}{2}\partial_{tt}F_2(\lambda_m^+,0)t^2\\
&+&((\lambda-\lambda_m^+)^2+t^2)\epsilon(\lambda,t)
\end{eqnarray*}
where 
\[\underset{(\lambda,t) \rightarrow (\lambda_m^+,0)}{\lim} \epsilon(\lambda,t)=0.\]
%Moreover, $\varepsilon$ has the same definition domain than $F_2$ .
  Then using Proposition $\ref{explicit}$ we get for any $( \lambda,t)\in (\lambda_m^+-\eta,\lambda_m^++\eta) \times(-\eta,\eta)$, with $\eta>0$,
\[ F_2(\lambda,t)=-\Big(a_m(b)(\lambda-\lambda_m^+)+c_m(b)(\lambda-\lambda_m^+)^2+d_m(b) t^2+\big((\lambda-\lambda_m)^2+t^2\big)\epsilon(\lambda,t)\Big) \mathbb{W}_m\]
with 
\begin{eqnarray*}a_m(b)&=&-\frac{m[(m\lambda_m^+-m+1)^2-b^{2m}]}{b^{m-1}[(m\lambda_m^+ -m+1)^2+b^{2m}]^{\frac{1}{2}}},\\
c_m(b)&=&-\frac{2m^2b^{1-m} (m\lambda_m^+-m+1)^3}{[(m\lambda_m+-m+1)^2+b^{2m}]^{\frac{3}{2}}} 
\end{eqnarray*}
and
\[d_m(b)=\frac{m}{2}(m-1)b^{3-3m} \frac{(b^{2m-2}-(m\lambda_m^+-m+1)^2)^2}{([m\lambda_m^+-m+1]^2+b^{2m})^{\frac{1}{2}}} -\frac{\widetilde{\beta}_m}{2} \mathcal{K}_m.\]
Note that for $b=b_m^*$ we have $\Delta_m=0$ which implies that
$$\lambda_m^+=\frac{1+b^2}{2},\quad m\lambda_m^+-m+1=-b^m.$$
Thus we get
$$a_m(b_m^*)=0, c_m(b_m^*)>0.
$$
Moreover we can check that for $b=b_m^*$, 
$$
\widetilde{\beta}_m>0 \quad \textnormal{and}\quad  \mathcal{K}_m<0
$$
which implies in turn that $d_m(b_m^*)>0.$ Those properties on the signs  remain true for $b$ close to $b_m^*$, that is, $b$ belongs to some interval  $ [b_m, b_m^*]$. In addition we deduce  from the identity given in Remark \ref{rmq7} that for any $b\in (0,b_m^*)$ we have $a_m(b)>0.$ Indeed,
\begin{eqnarray*}
(m\lambda_m^+-m+1)^2-b^{2m}&=&\big(m\lambda_m^+-m+1-b^{m}\big)\big(m\lambda_m^+-m+1+b^{m}\big)\\
&=&\big(\sqrt{\Delta_m}-\sqrt{\Delta_m+ b^{2m}}-b^m\big)\big(\sqrt{\Delta_m}+b^m-\sqrt{\Delta_m+ b^{2m}}\big)\\
&<&0.
\end{eqnarray*}
Set $x_0(b)=\frac{a_m(b)}{2c_m(b)}$ and using the change of variables
\[s=\lambda-\lambda_m^++x_0(b),\quad \psi(s,t)\triangleq\epsilon\big(s+\lambda_m^+-x_0(b),t\big)\]
then the equation of $F_2$ becomes
\begin{equation}\label{ellipse1}c_m(b) s^2+d_m(b) t^2-\frac{a_m^2}{4 c_m(b)}+\big((s-x_0(b))^2+t^2\big)\psi(s,t)=0
\end{equation}
and 
\[
\underset{(s,t) \rightarrow (x_0(b),0)}{\lim} \psi(s,t)=0.\]
Note that if we remove $\psi$ from the second term of this equation we get the equation of a small ellipse centered at $(0,0)$ and of semi-axes $\frac{a_m(b)}{2c_m(b)}$ and $\frac{a_m(b)}{2\sqrt{d_m(b) c_m(b)}}$.  Thus taking $b$ close enough to $b_m^*$ one can guarantee that this ellipse is contained in the box $(-\epsilon_0,\epsilon_0)^2$ for  which the solutions of the equation $F_1$ still parametrized by $\varphi$. By small perturbation we expect to get a curve of solutions to $F_2$ which is a small perturbation of the ellipse. To prove rigorously  this expectation  we start with the  change of variable,
\[ t=x_0(b)\sqrt{\frac{c_m(b)}{d_m(b)}}\,x \quad \text{ and } \quad s=x_0(b)y.\]
Consequently,  the equation \eqref{ellipse1} becomes
\begin{equation}\label{Eqwa1}
\mathcal{G}(b,x,y)\triangleq x^2+y^2-1+\frac{1}{c_m(b)}\Big((y-1)^2+\frac{c_m(b)^2}{d_m(b)}x^2\Big)\widehat{\psi}\big(x_0(b),x,y\big)=0
\end{equation}
with
%\[\widehat{\psi}(s,t)=\frac{4 c_m(b)}{a_m
\[\widehat{\psi}(\mu,x,y)\triangleq \epsilon\Bigg( \mu\, y+\lambda_m^+-\mu, \mu\, \sqrt{\frac{c_m(b)}{d_m(b)}}x\Bigg).\]
 Now we shall characterize the geometric structure of  the planar set 
 \[ \widehat{\mathcal{E}}_b\triangleq  \big\{ (x,y)\in \R^2;\,\, \mathcal{G}(b,x,y)=0 \big\}.\]
 \begin{lemma}\label{Teclem}
 There exists $b_m\in(0,b_m^*)$ such that for any $b\in(b_m, b_m^*)$ the set  $\widehat{\mathcal{E}}_b$  contains  a ${C}^1$-Jordan curve and the point $(0,0)$ is located inside. In addition, this curve is a smooth  perturbation of the unit circle $\mathbb{T}$.
 
 \end{lemma}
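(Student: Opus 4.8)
\emph{Sketch of the argument.} The plan is to show that as $b\uparrow b_m^*$ the function $\mathcal{G}(b,\cdot,\cdot)$ of \eqref{Eqwa1} converges, on a fixed compact neighbourhood of the closed unit disc and in $C^k$ for every $k$, to $x^2+y^2-1$, whose zero set is the unit circle $\T$; the structure of $\widehat{\mathcal E}_b$ then follows from the implicit function theorem written in polar coordinates. First I would record the behaviour of the scalar quantities. Since $a_m(b_m^*)=0$, $c_m(b_m^*)>0$ and $d_m(b_m^*)>0$, and all three depend continuously on $b$, the number $x_0(b)=\frac{a_m(b)}{2c_m(b)}$ is well defined for $b$ near $b_m^*$, is strictly positive on $(b_m,b_m^*)$ because $a_m(b)>0$ there, and $x_0(b)\to0$ as $b\to b_m^*$, while $c_m(b)$ and $d_m(b)$ stay bounded away from $0$ and $\infty$. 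As observed after \eqref{ellipse1}, the ellipse attached to the quadratic part of $F_2$ shrinks to the origin, so the affine change of variables leading to \eqref{Eqwa1} is legitimate and, for $b$ sufficiently close to $b_m^*$, the identity \eqref{Eqwa1} holds for all $(x,y)$ in the fixed compact set $\overline{D_2}$.

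The heart of the matter --- and the step I expect to be the only delicate one --- is to upgrade the qualitative smallness of the remainder in \eqref{Eqwa1} to $C^1$ (indeed $C^k$) smallness, uniformly on $\overline{D_2}$, as $b\to b_m^*$. The bare remainder $\big((\lambda-\lambda_m^+)^2+t^2\big)\epsilon(\lambda,t)$ with $\epsilon$ only continuous is not suitable, since $\epsilon$ need not be differentiable at $(\lambda_m^+,0)$. Instead I would return to $F_2=\Phi\,\mathbb{W}_m$, where $\Phi$ is scalar and $C^\infty$ jointly in $(b,\lambda,t)$ near $(b_m^*,\lambda_m^+,0)$, and apply Taylor's formula with integral (Hadamard) remainder; using Proposition \ref{explicit} to identify the $0$th, $1$st and $2$nd order Taylor coefficients of $-\Phi$ at $(\lambda_m^+,0)$ one gets
\[
-\Phi(b,\lambda,t)=a_m(b)(\lambda-\lambda_m^+)+c_m(b)(\lambda-\lambda_m^+)^2+d_m(b)\,t^2+\sum_{|\alpha|=3}(\lambda-\lambda_m^+,t)^\alpha R_\alpha(b,\lambda,t)
\]
with the $R_\alpha$ jointly $C^\infty$. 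Substituting $\lambda-\lambda_m^+=x_0(b)(y-1)$ and $t=x_0(b)\sqrt{c_m(b)/d_m(b)}\,x$, dividing by $c_m(b)x_0(b)^2$, and using $a_m(b)=2c_m(b)x_0(b)$, the first three terms collapse exactly to $x^2+y^2-1$ while the remainder becomes $x_0(b)$ times a function that is a cubic polynomial in $(x,y)$ when $b=b_m^*$ and is jointly $C^\infty$ in $(b,x,y)$ on a neighbourhood of $\{b_m^*\}\times\overline{D_2}$. Thus $\mathcal{G}$ extends to a $C^\infty$ function there with $\mathcal{G}(b_m^*,x,y)=x^2+y^2-1$, so $\mathcal{G}(b,\cdot)\to x^2+y^2-1$ in $C^k(\overline{D_2})$ as $b\to b_m^*$ for every $k$. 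Equivalently, the positive-definite quadratic factor $(y-1)^2+(\mathrm{const})x^2$ multiplying $\widehat\psi$ in \eqref{Eqwa1} is comparable to the squared distance of the argument of $\epsilon$ from $(\lambda_m^+,0)$, which is precisely what cancels the apparent singularity; the Hadamard form makes this transparent.

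Granting this, the conclusion is routine. On $\T$ one has $\nabla_{x,y}(x^2+y^2-1)=(2x,2y)\neq(0,0)$, so by the $C^1$ convergence there are $\eta>0$ and $\varepsilon_1\in(0,\tfrac12)$ such that, for every $b\in(b_m^*-\eta,b_m^*)$: $\nabla_{x,y}\mathcal{G}(b,\cdot)$ does not vanish on the annulus $\mathcal{A}=\{1-\varepsilon_1<\sqrt{x^2+y^2}<1+\varepsilon_1\}$, while $\mathcal{G}(b,\cdot)<0$ on $\overline{D_{1-\varepsilon_1}}$ and $\mathcal{G}(b,\cdot)>0$ on $\overline{D_2}\setminus D_{1+\varepsilon_1}$. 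Writing $x=\rho\cos\theta$, $y=\rho\sin\theta$, the $\rho$-derivative of $\mathcal{G}(b,\rho\cos\theta,\rho\sin\theta)$ equals $\langle\nabla_{x,y}\mathcal{G}(b,\cdot),(\cos\theta,\sin\theta)\rangle$, which is close to $2\rho>0$ on $\mathcal{A}$; the implicit function theorem then produces, for each $\theta$, a unique $\rho=\rho_b(\theta)\in(1-\varepsilon_1,1+\varepsilon_1)$ solving $\mathcal{G}(b,\rho_b(\theta)\cos\theta,\rho_b(\theta)\sin\theta)=0$, with $\rho_b$ a $2\pi$-periodic function of class $C^\infty$ satisfying $\rho_b\to1$ in $C^k(\R/2\pi\Z)$ as $b\to b_m^*$. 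The curve $\theta\mapsto(\rho_b(\theta)\cos\theta,\rho_b(\theta)\sin\theta)$ is therefore a $C^1$ --- in fact $C^\infty$ --- Jordan curve contained in $\widehat{\mathcal E}_b$ and is a smooth perturbation of $\T$; finally $\mathcal{G}(b,0,0)=-1+O(x_0(b))\neq0$ shows $(0,0)\notin\widehat{\mathcal E}_b$, and since $\rho_b$ is uniformly close to $1$ the origin lies inside this curve. Taking $b_m=b_m^*-\eta$ finishes the proof.
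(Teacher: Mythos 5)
Your argument is correct and reaches the same structural conclusion as the paper --- a radial graph $\rho_b(\theta)$ over the unit circle, with smallness coming from $x_0(b)\to 0$ --- but the route differs in two genuine ways. First, you handle the regularity of the error term head-on: instead of keeping the merely-continuous remainder factor $\epsilon$ of the Taylor expansion, you rewrite the scalar component of $F_2$ with the integral (Hadamard) form of the third-order remainder, so that after the rescaling by $x_0(b)$ the perturbation of $x^2+y^2-1$ in \eqref{Eqwa1} is manifestly $O(x_0(b))$ in $C^k$ on a fixed disc. The paper instead works directly with $\widehat{\psi}$, introduces the auxiliary parameter $\mu$ in place of $x_0(b)$, and asserts that the resulting map $\mathcal{F}:(-\mu_0,\mu_0)\times\mathcal{B}\to C^1(\T)$ is $C^1$ with bounds uniform in $b\in[b_m,b_m^*]$; your Hadamard step is precisely what substantiates that kind of claim, so it buys rigor where the paper is terse. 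Second, where the paper applies the implicit function theorem once in the Banach space $C^1(\T)$ (using $\mathcal{F}(0,1)=0$ and $\partial_R\mathcal{F}(0,1)=2\,\mathrm{Id}$) to produce $R_{x_0(b)}$ all at once, you argue more elementarily: $C^1$-closeness of $\mathcal{G}(b,\cdot)$ to $x^2+y^2-1$, sign conditions on the inner and outer circles, positivity of the radial derivative on an annulus, and the scalar implicit function theorem angle by angle; both yield the same $C^1$ (indeed smoother) Jordan curve enclosing the origin. One small caveat: your claim that the reduced scalar function and the coefficients $R_\alpha$ are jointly $C^\infty$ in $(b,\lambda,t)$ up to $b=b_m^*$ is stronger than what is available, since $\lambda_m^+(b)$ involves $\sqrt{\Delta_m(b)}$ and need not be differentiable in $b$ at $b_m^*$; but your argument only uses continuity in $b$ together with uniform $C^k$ bounds in $(\lambda,t)$ (so that the remainder is $x_0(b)$ times something bounded), and with that weaker statement the proof goes through unchanged.
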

 \begin{proof}
 We shall look for  a curve of solutions lying in  the set $\widehat{\mathcal{E}}_b$ and  that can be parametrized through   polar coordinates  as follows $$
\theta\in [0,2\pi]\mapsto (x,y)=R(\theta) e^{\num{i}\theta}.
 $$
Now we fix $b$ and  introduce the function
\begin{eqnarray*}
\mathcal{F}(\mu, R(\theta))&\triangleq&R^2(\theta)-1\\
&-&\frac{1}{c_m(b)}\left([R(\theta)\sin \theta-1]^2+\frac{c_m(b)^2}{d_m(b)}R^2(\theta) \cos^2\theta\right)\widehat{\psi}\Big(\mu, R(\theta)\cos \theta, R(\theta)\sin\theta\Big).
\end{eqnarray*}
Then according to   \eqref{Eqwa1} it is enough to solve 
\begin{equation}\label{Eqwa2}
\mathcal{F}(\mu, R(\theta))=0\quad \hbox{and}\quad \mu=x_0(b).
\end{equation}
Recall that the function  $\epsilon$ is defined in the box $( \lambda,t)\in (\lambda_m^+-\eta,\lambda_m^++\eta) \times(-\eta,\eta)$ for some given real number  $\eta>0.$ Thus it is not hard to find an implicit value   $\mu_0>$ such that 
\[\mathcal{F}:  (-\mu_0, \mu_0) \times  \mathcal{B} \to  {C}^1(\mathbb{T})\]
is well-defined and is of  class $C^1$,  with $\mathcal{B}$ being  the open set of $C^1(\T)$ defined by
\[
 \mathcal{B}=\Big\{ R \in C^1(\T);\,\, \Vert 1-R\Vert_{\infty}+\|R^\prime\|_{\infty} < \frac{1}{2} \Big\}.
\]
In addition 
 \[ \mathcal{F}(0,1)=0\quad\textnormal{and}\quad \partial_R \mathcal{F}(0,1)h=2h,\,\forall h\in C^1(\T).\]
Thus $\partial_R \mathcal{F}(0,1)$ is an isomorphism and by  the implicit function theorem we deduce the existence of $\mu_1>0$ such that for any $\mu\in(-\mu_1,\mu_1)$ there exists a unique $R_\mu\in \mathcal{B}$ such that $(\mu,R_\mu)$ is a solution for $\mathcal{F}(\mu, R_\mu)=0$. It is worthy to point out that $\mu_1$ can be chosen independent of $b\in[b_m,b_m^*]$ because the coefficients $\frac{1}{c_m(b)}$ and $\frac{d_m(b)}{c_m(b)}$ that appear in the nonlinear contribution of $\mathcal{F}$ are  bounded.   Now since $\lim_{b\to b_m^*}x_0(b)=0$ then we deduce the existence of $b_m\in(0,b_m^*)$ such that for any $b\in(b_m, b_m^*)$ the set of solutions for \eqref{Eqwa2} is described around the trivial solution $R=1$ (the circle) by the curve $\theta\in [0,2\pi]\mapsto R_{x_0(b)}(\theta)e^{\num i\theta}$. 
 On the other hand from the definition of $\mathcal{B}$ we deduce that this  curve of solutions is contained in the annulus centered at the origin and of radii $\frac12$ and $\frac32$. It should be also non self-intersecting $C^1$ loop according to the regularity  of the polar parametrization. This achieves the proof of the lemma.

\end{proof}
Now let us see how to use Lemma \ref{Teclem}  to end  the proof of Theorem \ref{main}. We have already  seen in the subsection \ref{Linearized operator and Lyapunov-Schmidt reduction} that the vectorial conformal mapping $\Phi\triangleq\left( \begin{array}{c}
\Phi_1 \\ 
\Phi_2
\end{array}  \right)$ that describes the bifurcation curve from $(\lambda_m^+,0)$ is decomposed as follows
$$
\Phi(w)=\left( \begin{array}{c}
1 \\ 
b
\end{array}  \right)w+tv_m+\varphi(\lambda,tv_m)\quad\textnormal{with}\quad (t,\lambda)\in \mathcal{E}_b,
$$
where $ \mathcal{E}_b,$ has been defined in \eqref{param23}. As the relationship between the sets $\mathcal{E}_b$ and $\widehat{\mathcal{E}}_b$ is given through a non degenerate affine  transformations then the set $\mathcal{E}_b$ is also a $C^1$-Jordan curve. In addition the point $(\lambda_m^+ -\frac{a_m(b)}{2 c_m(b)},0) $ is located inside the curve and recall also that the point $(\lambda_m^+,0)$ belongs to $\mathcal{E}_b.$ This fact implies that the curve intersects necessary the real axis on  another point $(\lambda_m,0)\neq(\lambda_m^+,0)$. By virtue of  \eqref{phitriv}  the conformal mappings at those two points coincide  which implies in turn  that the bifurcation curve bifurcates also from the trivial solution at  the point $(\lambda_m,0)$. Note that looking from the side $(\lambda_m,0)$ this curve represents V-states with exactly $m$-fold symmetry and not with more symmetry; to be convinced see the structure of $\Phi$.  However from the local bifurcation diagram close to the trivial solution which was   studied in  \cite{3} we know that the only V-states  bifurcating from the trivial solutions with $m$-fold symmetry bifurcate from the points $(\lambda_m^+,0)$ and $(\lambda_m^-,0)$. Consequently we get $\lambda_m=\lambda_m^-$ and this shows that the bifurcation curves of the $m-$fold V-states merge and form a small loop. 

The last point to prove concerns the symmetry of the curve $\mathcal{E}_b$ with respect to the $\lambda$ axis. This reduces  to  check that for $(\lambda,t)\in \mathcal{E}_b$ then 
$(\lambda,-t)\in \mathcal{E}_b$.
Indeed, if $D=D_1\setminus D_2$ is an $m$-fold doubly-connected V-state, its boundaries are parametrized by the conformal mappings
\[ \Phi_j(w)= w \left(b_j+ a_{j,1} \,\overline{w}^m+ \sum_{n \geq 2} {a_{j,n}}{\overline{w}^{mn}} \right), \, a_{j,n} \in \R \]
Hence we find that the vectorial conform mapping $\Phi$ admits the decomposition 
$$\Phi(w)= \left( \begin{array}{c}
1 \\ 
b
\end{array} \right) w+ \left( \begin{array}{c}
a_{1,1} \\ 
a_{2,1}
\end{array} \right)\overline{w}^{m-1}+\Psi(w),\quad \Psi \in \mathcal{X}_m
$$
Observe that we have a unique $t$ such that
\begin{equation}\label{confor45}
 \left( \begin{array}{c}
a_{1,1} \\ 
a_{2,1}
\end{array} \right)\overline{w}^{m-1}= t v_m(w)+ \Psi_1(w),\quad  \Psi_1\in \mathcal{X}_m
\end{equation}
Now we consider $\widehat{D}=e^{\frac{i\pi}{m}} D$, the rotation of D with the angle $\frac{\pi}{m}$. Then the new domain is also a V-state with a $m$-fold symmetry rotating with the same angular velocity. Thus it should be associated to a point $(\lambda, \tilde{t})\in \mathcal{E}_b$. Now the  conformal parametrization of $\widehat{D}$ is given by 
\[ \widehat{\Phi_j}(w)= e^{-\frac{i\pi}{m}} \Phi_j(e^{\frac{i\pi}{m}} w)=w \left( b_j+ \sum_{n \geq 1}(-1)^n \frac{a_{j,n}}{w^{mn}} \right), \, a_{j,n} \in \R. \]
Thus the vectorial conformal parametrization $\widehat{\Phi}\triangleq\left( \begin{array}{c}
\widehat{\Phi}_1 \\ 
\widehat{\Phi}_2
\end{array}  \right)$ admits the decomposition 

$$\widehat{\Phi}(w)= \left( \begin{array}{c}
1 \\ 
b
\end{array} \right) w- \left( \begin{array}{c}
a_{1,1} \\ 
a_{2,1}
\end{array} \right)\overline{w}^{m-1}+\widehat{\Psi}(w),\quad \widehat\Psi \in \mathcal{X}_m
$$
In view of \eqref{confor45} we deduce that
\begin{equation}\label{confor45}
\widehat{\Phi}(w)= \left( \begin{array}{c}
1 \\ 
b
\end{array} \right) w-t v_m(w)+\widehat{\Psi_1}(w),\quad \widehat{\Psi_1}(w)\triangleq - \Psi_1+\widehat\Psi \in \mathcal{X}_m
\end{equation}
This shows that $(\lambda,-t)$ belongs to the curve $\mathcal{E}_b$ and this concludes the desired result.

\begin{ackname}
 The authors are partially supported by the ANR project Dyficolti ANR-13-BS01-0003- 01.
  \end{ackname}
%\begin{thebibliography}{99}
%\bibitem{1} J. Burbea \textit{Motions of vortex patches. } Lett. Math. Phys.6 (1982), no. 1,1-16. 
%\bibitem{2} J-Y. Chemin  \textit{Perfect incompressible Fluids. } Oxford University Press 1998.
%\bibitem{3}  G.S. Deem,N.J. Zabusky \textit{Vortex waves: Stationnary "V-states", Interactions, Recurrence, and Breaking }. Phys. Rev. Lett .40 (1978), no. 13,859-862. 
%
%\bibitem{4}T. Hmidi, F. de la Hoz, J. Mateu and J.Vendera \textit{Doubly connected V-states for the planar Euler equations.} Arxiv: 1409.7096v1
%\bibitem{6} T. Hmidi, J. Mateu , Joan Verdera , \textit{Boundary Regularity of Rotating Vortex Patches} Arch.Ration. Mech. Anal. 209 (2013), no. 1, 171-208.
%\end{thebibliography}{99}

  \end{document}